\documentclass[]{article}
\usepackage{amsmath}
\usepackage{amssymb}
\usepackage{mathtools}
\usepackage{graphicx}
\usepackage{subfigure}
\usepackage{float}
\usepackage{bbm}
\usepackage{amsthm}
\usepackage{hyperref}
\usepackage{tikz}
\usepackage{tikz-3dplot}
\usepackage{multirow}

% Theorem environments
\newtheorem{thm}{Theorem}  
  
\newtheorem{lem}{Lemma}  
\newtheorem{prop}{Proposition} 
\newtheorem{example}{Example}
\newtheorem{defi}{Definition}
\newtheorem{rmk}{Remark}  
\newtheorem{question}{Question}

\numberwithin{table}{section}

% Title and Author
\title{Collapsing of K3 Surfaces and Special Kähler Structures}
\author{Zexuan Ouyang}
\date{}

\begin{document}
	
	\bibliographystyle{plain}
	
	\maketitle
	
	\begin{abstract}

		We study the structure of $\mathfrak{M}_2$, the set of half-dimensional collapsing spaces of hyperkähler metrics on K3 surfaces. We show that $\mathfrak{M}_2$ consists precisely of those underlying metric spaces of integral singular special Kähler structures (SKSs) on \( \mathbb{P}^1 \). Furthermore, we establish a bijection between integral singular SKSs on \( \mathbb{P}^1 \) and Jacobian elliptic K3 surfaces with a marked holomorphic volume form. Additionally, we compute the number of Jacobian elliptic K3 surfaces that correspond to a given metric on \( \mathbb{P}^1 \).
	\end{abstract}

	\section{Introduction}
	
	A K3 surface is a simply connected, compact complex surface with a trivial canonical bundle. Siu \cite{Siu} proved that all K3 surfaces are Kähler, and by Yau’s celebrated theorem \cite{yau}, each K3 surface admits a unique hyperkähler metric in every Kähler class. Let \( \mathfrak{M} \) denote the space of isometric classes of hyperkähler metrics on K3 surfaces with unit diameter. We are interested in its Gromov--Hausdorff compactification \( \overline{\mathfrak{M}} \).
	
	Given a Gromov--Hausdorff convergent sequence of hyperkähler metrics on K3 surfaces, there is a dichotomy depending on whether the sequence is volume collapsing or not \cite{cheeger-colding}. For a non-collapsing limit space, we know the Hausdorff dimension is 4. For a collapsing limit space, by Cheeger--Tian’s \( \epsilon \)-regularity theorem \cite{cheegertian}, the $L_2$ norm of curvature concentrates near finitely many points \( \{q_1, \dots, q_n\} \) of the limit space \( X_\infty \), and by Naber--Tian \cite{naber-tian-orbifold}, there is a smooth Riemannian orbifold structure on \( X_\infty \backslash \{q_1, \dots, q_n\} \). Consequently, the Hausdorff dimension of the limit space is always an integer. This allows us to stratify the boundary of \( \mathfrak{M} \) as  
	\[
	\overline{\mathfrak{M}} \backslash \mathfrak{M} = \bigcup_{k=1}^4 \mathfrak{M}_k,
	\]
	where each \( \mathfrak{M}_k \) corresponds to limit spaces of dimension \( k \).    
	
	The moduli space of Einstein metrics on K3 surfaces, possibly with orbifold singularities, is known to be: (see \cite{kob-Tod})
	\[
	\mathcal{D} \coloneq \Gamma \backslash SO^+(3,19)/(SO(3) \times SO(19)),
	\]
	where \( \Gamma \) is the automorphism group of the K3 lattice. Anderson \cite{anderson} proved that $\mathcal{D}$ captures all non-collapsing limit spaces of hyperkähler K3 surfaces, and there is a continuous bijection from \( \mathcal{D} \) to \( \mathfrak{M} \cup \mathfrak{M}_4 \) (see also \cite[Chapter 6]{odaka-oshima}). As a special case of hyperkähler K3 orbifolds, all flat \( T^4 / \mathbb{Z}_2 \) belong to \( \mathfrak{M}_4 \). By considering Gromov--Hausdorff limits of sequences of flat \( T^4 / \mathbb{Z}_2 \), we deduce that all flat \( T^k / \mathbb{Z}_2 \) belong to \( \mathfrak{M}_k \) for \( k = 1, 2, 3 \).

	Sun--Zhang \cite{sunzhang} proved that these flat \( T^k / \mathbb{Z}_2 \) exhaust all collapsing limit spaces in \( \mathfrak{M}_1 \) and \( \mathfrak{M}_3 \). In contrast, for \( (X_\infty, g) \in \mathfrak{M}_2 \), they showed that \( (X_\infty, g) \) is homeomorphic to \( \mathbb{P}^1 \) and admits a singular special Kähler structure (SKS), and they conjectured that these SKSs admit integral monodromy \cite[Conjecture 7.4]{sunzhang}. We also remark that Odaka--Oshima \cite{odaka-oshima} proposed a interesting conjecture relating \( \overline{\mathfrak{M}} \) to the Satake compactification of \( \mathcal{D} \).
	
	In this paper, we focus on the structure of the half-dimensional collapsing space \( \mathfrak{M}_2 \), which is closely related to the Strominger--Yau--Zaslow conjecture \cite{syz}. In Section \ref{section-integrality}, we prove the following theorem, confirming Sun--Zhang's conjecture:
	
	\begin{thm} \label{thm-integrality}
		Let \( (X_\infty, g_\infty) \) be a 2-dimensional Gromov--Hausdorff limit of hyperkähler metrics on K3 surfaces. Then \( (X_\infty, g_\infty) \) admits an integral singular SKS.
	\end{thm}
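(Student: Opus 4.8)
The plan is to exhibit a $\nabla$-parallel lattice inside the flat symplectic plane bundle underlying the singular SKS, since ``integrality'' is precisely the assertion that the monodromy representation $\rho\colon \pi_1(X_\infty^{\mathrm{reg}})\to SL(2,\mathbb{R})$ preserves such a lattice, i.e.\ is conjugate into $SL(2,\mathbb{Z})$. Here $X_\infty^{\mathrm{reg}}:=X_\infty\setminus\{q_1,\dots,q_n\}$, on which Sun--Zhang furnish a genuine SKS with flat torsion-free symplectic connection $\nabla$, and on which Naber--Tian furnish a smooth Riemannian orbifold structure. The lattice I aim to construct is the integral first homology of the collapsing torus fibers.

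First I would make precise the fibration picture on the bounded-curvature locus. Away from the curvature-concentration points $q_i$, the convergence $(X_i,g_i)\to(X_\infty,g_\infty)$ is a collapse with locally bounded curvature, so by Cheeger--Fukaya--Gromov it is modeled over each small ball of $X_\infty^{\mathrm{reg}}$ by an almost-flat $T^2$-fibration whose fibers are the shrinking $2$-tori. As the base point varies these fibers assemble into a topological $T^2$-bundle over $X_\infty^{\mathrm{reg}}$, and I let $\Lambda$ be the local system $p\mapsto H_1(T^2_p,\mathbb{Z})\cong\mathbb{Z}^2$. Since its structure maps are orientation-preserving self-homeomorphisms of $T^2$, they act on $H_1(\cdot,\mathbb{Z})$ through $SL(2,\mathbb{Z})$; thus $\Lambda$ is a rank-two local system of free $\mathbb{Z}$-modules, carrying the symplectic intersection pairing, with monodromy already valued in $SL(2,\mathbb{Z})$.

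The heart of the argument is to identify $(\Lambda\otimes_{\mathbb{Z}}\mathbb{R},\nabla^{\mathrm{GM}})$ with the SKS bundle $(V,\nabla)$. I would do this via period coordinates: in the distinguished complex structure for which the collapse is asymptotically an elliptic fibration with holomorphic $2$-form $\Omega_i$ on $X_i$, the fiber periods $z_\gamma=\int_\gamma\Omega_i$, $\gamma\in\Lambda$, descend to the base and converge to the special (affine) coordinates of the limiting SKS; the defining conditions $\nabla\omega=0$ and $d^\nabla J=0$ then force $\nabla$ to be the connection in which the $z_\gamma$ are flat, which is exactly the Gauss--Manin connection $\nabla^{\mathrm{GM}}$ of $\Lambda$. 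I expect this step to be the main obstacle, since it requires enough uniform control of the hyperkähler triple along the Gromov--Hausdorff limit to certify that the abstract metric connection of Sun--Zhang is literally $\nabla^{\mathrm{GM}}$, and not merely some $SL(2,\mathbb{R})$-refinement of it; the discreteness of $SL(2,\mathbb{Z})\subset SL(2,\mathbb{R})$ should help rigidify this matching once approximate integrality is established.

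Granting the identification, $\nabla$ preserves $\Lambda$, so $\rho$ lands in $SL(2,\mathbb{Z})$ and the SKS is integral over $X_\infty^{\mathrm{reg}}$. To conclude I would verify compatibility at the punctures: near each $q_i$ the local monodromy of $\Lambda$ is a conjugate of a Kodaira-type element of $SL(2,\mathbb{Z})$ matching the prescribed singular model, so the global local system $\Lambda$ upgrades Sun--Zhang's singular SKS to an integral one, confirming their conjecture.
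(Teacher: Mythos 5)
Your first two steps are sound and in fact coincide with the paper's starting point: Sun--Zhang's Theorem~3.27 (Proposition~\ref{converge} in the paper) already packages the Cheeger--Fukaya--Gromov $T^2$-fibration into a sequence of \emph{integral} SKSs on each compact $Q\subset\subset X_\infty^{\mathrm{reg}}$ converging smoothly to $g_\infty$; equivalently, your local system $\Lambda$ and the identification of the approximating connections with Gauss--Manin are available at every finite stage. The genuine gap is the step you yourself flag as the main obstacle, and your proposed fix does not work. The issue is not that $\nabla$ might be a mysterious $\mathrm{SL}_2(\mathbb{R})$-refinement of $\nabla^{\mathrm{GM}}$ at finite stage; it is that the integral special pairs $(dz_i,dw_i)$ (the periods in an integral frame of $\Lambda$) need \emph{not converge at all}: $\mathrm{Im}(\tau_i)$ can blow up, as actually happens for collapsing Kummer surfaces (Example~\ref{example-kummer}). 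To extract a limit one must renormalize by $P_i=\mathrm{diag}(\lambda_i,\lambda_i^{-1})$ with $\lambda_i=\sqrt{\mathrm{Im}\,\tau_i(q)}\to\infty$, and the limit monodromy is then $\lim P_i^{-1}A_{\gamma,i}P_i$ with \emph{unbounded} conjugators. Discreteness of $\mathrm{SL}_2(\mathbb{Z})$ gives no rigidity here: conjugates of integral matrices by unbounded real matrices converge, when they converge, to matrices of the form $\pm\begin{pmatrix}1&b\\0&1\end{pmatrix}$ with $b=\lim\lambda_i^{-2}b_i$ an arbitrary nonnegative real, a priori irrational. In this regime your lattice $\Lambda$ simply does not survive as a $\nabla$-parallel lattice in the limit; the conclusion ``$\nabla$ preserves $\Lambda$'' is precisely what fails.

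What is missing is a global argument on $\mathbb{P}^1$ that kills these residual real unipotent factors, and this is the actual content of the paper's proof. After the renormalization, every limit monodromy is $\pm I_b$ with $b\ge 0$; the sign constraint $b\ge0$ comes from the local classification of singular models (the monodromy around a singularity is never $\mathrm{SL}_2(\mathbb{R})$-conjugate to $\pm I_{-1}$, Remark~\ref{positivity-singularity}). Choosing loops $\gamma_1,\dots,\gamma_n$ around the singular points with $\gamma_1\cdots\gamma_n=\mathrm{Id}$ in $\pi_1$ of the punctured $\mathbb{P}^1$ forces $\sum_k b_k=0$, hence $b_k=0$ for all $k$, and only then is the limit SKS integral. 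Your purely local/fibration-theoretic plan has no mechanism playing the role of this monodromy-sum-plus-positivity argument, so the unbounded case---which is nonempty---is left unproved; your final paragraph about Kodaira-type local monodromies at the punctures presupposes the integrality you are trying to establish.
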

	
	Given an integral SKS on a Riemann surface, one can associate a holomorphic torus fibration, also known as an algebraic integrable system \cite{freed}. In Section \ref{relation2}, we generalize this correspondence to include singular cases and prove the following theorem. Recall that a Jacobian elliptic K3 surface is an elliptic fibration of a K3 surface, \( f\colon X \to \mathbb{P}^1 \), equipped with a global section.
	
	\begin{thm} \label{correspondence}
		There is a one-to-one correspondence between:
		\begin{enumerate}
			\item Integral singular SKS on \( \mathbb{P}^1 \).
			\item Jacobian elliptic K3 surfaces \( f\colon X \to \mathbb{P}^1 \) together with a holomorphic volume form \( \Omega \) on \( X \).
		\end{enumerate}
	\end{thm}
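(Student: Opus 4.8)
The plan is to construct maps in both directions and check that they are mutually inverse. Write $B = \mathbb{P}^1$, let $\Sigma = \{p_1, \dots, p_k\}$ denote the singular locus of the SKS (resp.\ the discriminant of the fibration), and set $B^\circ = B \setminus \Sigma$. \emph{From (1) to (2):} starting from an integral singular SKS, I would first work over the regular part $B^\circ$, where the integral structure provides a flat symplectic lattice $\Lambda \cong \mathbb{Z}^2$ inside a flat symplectic vector bundle, with monodromy representation $\rho \colon \pi_1(B^\circ) \to SL(2,\mathbb{Z})$. Following the construction associating an algebraic integrable system to an integral SKS \cite{freed}, I would form the fiberwise quotient $X^\circ = V/\Lambda$ for the appropriate flat bundle $V$; the special Kähler data equips $X^\circ$ with a complex structure making $f^\circ \colon X^\circ \to B^\circ$ a holomorphic elliptic fibration with zero section, together with a holomorphic symplectic $2$-form $\Omega^\circ$. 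I would then compactify: around each $p_j$ the local monodromy $\rho(\gamma_j) \in SL(2,\mathbb{Z})$ is conjugate to one of Kodaira's monodromy matrices, and inserting the corresponding Kodaira singular fiber yields a compact surface $f \colon X \to \mathbb{P}^1$. Finally I would check that $\Omega^\circ$ extends to a nowhere-vanishing $\Omega$ (via local normal forms near the singular fibers), so $K_X$ is trivial, and compute $e(X)$ by Kodaira's Euler-number formula; compactness of $B$ forces the monodromies to compose to the identity and the fiber contributions to sum to $24$, identifying $X$ as a K3 surface.

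\emph{From (2) to (1):} conversely, given $f \colon X \to \mathbb{P}^1$ Jacobian with marked $\Omega$, I would restrict to $B^\circ$, over which the fibers are smooth elliptic curves, and take $\Lambda = R^1 f_* \mathbb{Z}$ as the integral structure, with monodromy in $SL(2,\mathbb{Z})$ automatically. Over a simply connected chart with base coordinate $u$ I would write $\Omega = du \wedge \eta_u$ for the fiberwise holomorphic $1$-form $\eta_u$ obtained by contraction, and define the special coordinates as the periods $z = \int_{\gamma_1}\eta_u$ and $z_D = \int_{\gamma_2}\eta_u$ over a symplectic basis $(\gamma_1, \gamma_2)$ of $H_1(X_u,\mathbb{Z})$. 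The Riemann bilinear relations then yield a positive-definite metric satisfying the special-Kähler axioms, and the singular structure is read off from the degeneration of the periods near each point of $\Sigma$, whose asymptotics are governed by the Kodaira type of the fiber above it.

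\emph{Mutual inverse:} to finish, the composite $(1)\to(2)\to(1)$ recovers the original special coordinates essentially by construction, since the periods of the $\Omega$ produced in the first direction are precisely the special coordinates one started with. For $(2)\to(1)\to(2)$, the key point is that the marked section makes each smooth fiber $X_u$ canonically isomorphic to its Jacobian $\mathrm{Pic}^0(X_u)$ via Abel--Jacobi, which is exactly the torus reproduced by the quotient construction; this biholomorphism over $B^\circ$ extends across $\Sigma$ because both fibrations carry the same Kodaira fibers, and it matches the volume forms because the scale of $\Omega$ was fixed in advance.

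The main obstacle I expect is the singular-point analysis: matching the local monodromy and metric asymptotics of the admissible integral singular SKS singularities bijectively with Kodaira's list of singular fibers, and proving that $\Omega$ extends across each singular fiber without zeros or poles. This is also where the K3 condition is pinned down, since relating the integral-singular constraint --- a Gauss--Bonnet-type identity coming from the compactness of $\mathbb{P}^1$ together with the composition of the local monodromies --- to Kodaira's Euler-number formula is what forces the total space to be a K3 rather than, say, a rational elliptic surface. The role of the marked holomorphic volume form, fixing the otherwise free overall scaling on a K3, is essential for obtaining a genuine bijection rather than a correspondence only up to scale.
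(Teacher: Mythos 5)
Your overall architecture coincides with the paper's: over the regular part the paper invokes exactly the correspondence you describe (realizing \( X^\circ = T^*B^\circ/\Lambda \) with \( h^*\Omega = \Omega_{\mathrm{can}} \), its Proposition~\ref{bijection-regular-region}), then compactifies by inserting Kodaira fibers and checks, via Hein's period computation, that \( \Omega \) extends with neither zeros nor poles along the added fibers. But two of your steps need repair. First, your identification of \( X \) as a K3 is off: the fact that the local monodromies compose to the identity does \emph{not} force the Euler contributions to sum to \( 24 \) --- a rational elliptic surface satisfies the same monodromy constraint with sum \( 12 \). What pins down \( 24 \) is metric, not monodromic: the Gauss--Bonnet-type identity \( \sum_x i(x) = \chi(\mathbb{P}^1) = 2 \) of Kontsevich--Soibelman uses the cone angles and nonnegative curvature of the special K\"ahler metric, and the paper records it only as a remark giving an alternative proof of the 24-fiber bound. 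In fact the Euler computation is unnecessary: once \( \Omega \) extends nowhere-vanishing, \( K_X \) is trivial, which already excludes rational elliptic surfaces; the paper instead quotes Friedman for simple connectivity of an elliptic fibration over \( \mathbb{P}^1 \) with a section and no multiple fibers, and simply connected plus trivial canonical bundle gives K3 directly, with no Euler-number argument at all.

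Second --- and this is the actual heart of the paper's proof --- you defer precisely the step that carries the content: the normal-form classification of \emph{integral special pairs} at the singularities (the paper's Proposition~\ref{correspondence-singularity} and Table~\ref{table-two-singular-model}). Conjugacy of the local monodromy to a Kodaira matrix is not enough either to glue in a Kodaira fiber canonically or to run the divisor computation for \( \Omega \); one must show that after an \( \mathrm{SL}_2(\mathbb{Z}) \) change of basis and a holomorphic change of the coordinate \( \zeta \), the special pair itself takes one of finitely many normal forms matching Kodaira's list. The paper proves this type by type using an elementary but essential commutation lemma (any \( A \in \mathrm{SL}_2(\mathbb{R}) \) commuting with \( R_\alpha \), \( \alpha \notin \pi\mathbb{Z} \), is a rotation \( R_\theta \), whence the ambiguity in the integral structure is exactly a twist). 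Note also that your assertion that each \( \rho(\gamma_j) \) is conjugate to a Kodaira matrix silently uses the positivity constraint of Remark~\ref{positivity-singularity}: the class of \( \pm I_{-1} \), which matches no Kodaira fiber, is excluded by \( \mathrm{Im}\,\tau > 0 \) in the admissible local models, not by integrality. Flagging this singular-point analysis as ``the main obstacle'' is honest, but without it the proposed bijection is a plan rather than a proof.
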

	
	Furthermore, we define the notion of "twist transformations" for SKS (see Section \ref{section-twist}), which preserve the metric and correspond to rotating \( \Omega \) by \( e^{i\theta} \) in Theorem~\ref{correspondence}. After normalizing to unit diameter, this leads to a one-to-one correspondence between:
	
	\begin{enumerate}
		\item Integral singular SKS on \( \mathbb{P}^1 \) with unit diameter, considered up to the equivalence relation given by twist. 
		\item Jacobian elliptic K3 surfaces \( f\colon X \to \mathbb{P}^1 \).
	\end{enumerate}

	Thus, given a Jacobian elliptic K3 surface, we can associate with it a metric space \( (\mathbb{P}^1, g) \) induced by the integral singular SKS on $\mathbb{P}^1$. This metric turns out to be a generalized Kähler--Einstein metric \cite{songtian}. By Chen--Viaclovsky--Zhang’s construction \cite[Theorem~1.1]{Chen2019CollapsingRM}, these metric spaces \( (\mathbb{P}^1, g) \) are the Gromov--Hausdorff limits of collapsing sequence of hyperkähler metrics on K3 surfaces. 
	
	Combining Theorem~\ref{thm-integrality}, Theorem~\ref{correspondence}, and the results of Chen et al., we conclude that:

	\begin{thm}
		 \( \mathfrak{M}_2 \) consists precisely of the underlying metric spaces of integral singular SKSs on \( \mathbb{P}^1 \). Moreover, all the limit spaces in \( \mathfrak{M}_2 \) are given by Jacobian elliptic K3 surfaces as in Theorem~\ref{correspondence}.
	\end{thm}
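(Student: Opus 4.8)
The plan is to prove the set equality in two inclusions and then read off the ``moreover'' clause directly from Theorem~\ref{correspondence}, since the statement is essentially a synthesis of the three ingredients already in hand. For the forward inclusion, I would take an arbitrary $(X_\infty, g_\infty) \in \mathfrak{M}_2$ and invoke the structural theorem of Sun--Zhang \cite{sunzhang}: $(X_\infty, g_\infty)$ is homeomorphic to $\mathbb{P}^1$ and its smooth locus carries a singular SKS whose induced metric is $g_\infty$. Applying Theorem~\ref{thm-integrality} to this limit upgrades the monodromy to be integral, so $(X_\infty, g_\infty)$ is precisely the underlying metric space of an \emph{integral} singular SKS on $\mathbb{P}^1$. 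This step is formal once Theorem~\ref{thm-integrality} is granted.

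For the reverse inclusion I would start from an arbitrary integral singular SKS on $\mathbb{P}^1$ with underlying metric $(\mathbb{P}^1, g)$, normalized to unit diameter (the twist/scaling freedom makes this harmless). By Theorem~\ref{correspondence} this data is equivalent to a Jacobian elliptic K3 surface $f\colon X \to \mathbb{P}^1$ equipped with a holomorphic volume form $\Omega$. The crucial point is to identify $g$ with the generalized Kähler--Einstein metric on the base in the sense of Song--Tian \cite{songtian}, which is exactly the limiting metric appearing in the collapsing construction of Chen--Viaclovsky--Zhang \cite[Theorem~1.1]{Chen2019CollapsingRM}. Their theorem then produces a sequence of hyperkähler metrics on $X$, rescaled to unit diameter, whose fibers shrink and which converge in the Gromov--Hausdorff sense to $(\mathbb{P}^1, g)$, placing this space in $\mathfrak{M}_2$.

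Combining the two inclusions yields $\mathfrak{M}_2 = \{\text{underlying metric spaces of integral singular SKSs on } \mathbb{P}^1\}$. For the ``moreover'' statement I would simply revisit the forward inclusion: given $(X_\infty, g_\infty) \in \mathfrak{M}_2$, the associated integral singular SKS is, by Theorem~\ref{correspondence}, produced by some Jacobian elliptic K3 surface $f\colon X \to \mathbb{P}^1$ with volume form $\Omega$, which is the desired description.

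I expect the reverse inclusion to be the main obstacle. The delicate point is to verify that the generalized Kähler--Einstein metric attached to an \emph{arbitrary} integral singular SKS falls within the exact class of metrics that \cite{Chen2019CollapsingRM} realizes as collapsing limits, rather than only those arising from a restricted family of elliptic fibrations. Concretely, one must check that the discriminant data---the singular fibers and their integral monodromy---of the Jacobian elliptic K3 surface furnished by Theorem~\ref{correspondence} are compatible with the hypotheses of Chen--Viaclovsky--Zhang, and that the unit-diameter normalizations on the SKS side and on the hyperkähler side match up, so that the two constructions genuinely cover the same family without gaps or overcounting.
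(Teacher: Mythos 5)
Your proposal is correct and follows essentially the same route as the paper, which obtains the theorem by combining the Sun--Zhang structure theorem with Theorem~\ref{thm-integrality} for the forward inclusion, and Theorem~\ref{correspondence} together with the construction of \cite{Chen2019CollapsingRM} for the reverse one. The compatibility worry you raise at the end is not an actual obstacle: the paper identifies the SKS metric with the generalized K\"ahler--Einstein metric via the computation $\mathrm{Ric}(\omega)=\omega_{\mathrm{WP}}$ and remarks that the construction of Chen--Viaclovsky--Zhang applies to \emph{all} elliptic K3 fibrations, so every Jacobian elliptic K3 surface produced by Theorem~\ref{correspondence} falls within its scope.
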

	
	 This provides a complete description of \( \mathfrak{M}_2 \). Together with previous work, we have a complete classification of the Gromov--Hausdorff limit spaces of hyperkähler metrics on K3 surfaces. Furthermore, it is natural to ask the following:
	
	\begin{question}\label{question-uniqueness}
		Can we determine the Jacobian K3 surface from the singular metric on \( \mathbb{P}^1 \) according to Theorem~\ref{correspondence}?
	\end{question}
	
	As we will see, it is convenient to include information about the complex structure.  We define \( \mathfrak{M}_2^c \coloneq \{ (\mathbb{P}^1,g) \}/\sim \), where \( g \) is the underlying metric of a singular integral SKS on \( \mathbb{P}^1 \), and \( (\mathbb{P}^1,g) \sim (\mathbb{P}^1,g') \) if and only if there is an isometry from \( (\mathbb{P}^1,g) \) to \( (\mathbb{P}^1,g') \) that preserves the complex structure of \( \mathbb{P}^1 \). The forgetting map \( \mathfrak{M}_2^c \to \mathfrak{M}_2 \) is generally 2-to-1 by definition. Let \( \mathcal{M}_{\text{Jac}} \) denote the moduli space of Jacobian elliptic K3 surfaces. There is a surjective map given by Theorem~\ref{correspondence}:  
	\[
	\mathcal{F}\colon \mathcal{M}_{\text{Jac}} \to \mathfrak{M}_2^c,
	\]
	which sends a Jacobian elliptic K3 surface to the underlying metric space of the singular integral SKS on \( \mathbb{P}^1 \).

	As we will see in Section \ref{section-twist}, two SKSs on a Riemann surface are isometric if and only if they are related by a twist transformation. Thus, Question~\ref{question-uniqueness} is closely related to the uniqueness of the integral structure on a singular SKS. In Section \ref{injectivity}, we prove the following theorem, which gives a satisfactory answer to Question~\ref{question-uniqueness}:
	
	\begin{thm}\label{injective-theorem}
		Given a Jacobian elliptic K3 surface \( f\colon X \to \mathbb{P}^1 \), assume that \( \mathcal{F} \) maps \( (X, f) \) to \( (\mathbb{P}^1, g) \). Then:
		
		\begin{enumerate}
			\item If \( (X, f) \) is isotrivial, then \( (\mathbb{P}^1, g) \) has a flat metric on its regular part. If \( (\mathbb{P}^1, g) \) is a flat \( T^2 / \mathbb{Z}_2 \), then \( X \) is a Kummer surface, and \( \mathcal{F}^{-1}\left((\mathbb{P}^1, g)\right) \) is parameterized by \( \mathbb{C} \). If \( (\mathbb{P}^1, g) \) is not flat \( T^2 / \mathbb{Z}_2 \), then \( \left|\mathcal{F}^{-1}\left((\mathbb{P}^1, g)\right)\right| = 1 \).
			
			\item If \( (X, f) \) is non-isotrivial, then \( (\mathbb{P}^1, g) \) is not flat on its regular part. Take an integral singular SKS on \( \mathbb{P}^1 \) with \( (\mathbb{P}^1, g) \) as the underlying metric, and denote the monodromy group by \( G \). Then, \( \left|\mathcal{F}^{-1}\left((\mathbb{P}^1, g)\right)\right| = N(G) \), a finite number determined by \( G \). Furthermore, \( \left|\mathcal{F}^{-1}\left((\mathbb{P}^1, g)\right)\right| = 1 \) if \( (X, f) \) has 24 \( I_1 \)-fibers.
		\end{enumerate}
		
	\end{thm}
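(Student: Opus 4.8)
The plan is to reduce the entire statement to a problem about invariant lattices. By Theorem~\ref{correspondence} and the twist normalization of Section~\ref{section-twist}, a Jacobian elliptic K3 surface lying in $\mathcal{F}^{-1}\bigl((\mathbb{P}^1,g)\bigr)$ is the same datum as an integral singular SKS whose underlying metric is $g$, taken up to twist. Since two SKSs on a Riemann surface are isometric exactly when they differ by a twist, the metric $g$ determines the underlying real special Kähler structure — equivalently the flat symplectic $\mathbb{R}^2$-bundle $(V,\nabla)$ on the regular part together with its period map — and the only remaining freedom is the choice of a $\nabla$-flat lattice $\Lambda \subset V$ preserved by the monodromy, that is, an integral structure. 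Fixing the monodromy group $G \subset SL(2,\mathbb{Z})$ of one such structure, I would establish a bijection
\[
\mathcal{F}^{-1}\bigl((\mathbb{P}^1,g)\bigr) \ \longleftrightarrow\ \bigl\{\, G\text{-invariant lattices } \Lambda \subset V \,\bigr\} \,/\, \sim ,
\]
where $\sim$ identifies lattices related by homothety and by the automorphisms of the SKS, so that the theorem becomes a count of invariant lattices.

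For part~(1), I would first note that isotriviality forces the fiber modulus $\tau$ to be locally constant, so the cubic form $C$ obstructing flatness of an SKS vanishes and $g$ is flat on its regular part; conversely a varying period gives $C \neq 0$. The monodromy then lands in the stabilizer of $\tau_0$ inside $SL(2,\mathbb{Z})$, a finite cyclic group, and I would split on its order. If $G$ contains an element of order $3$, $4$, or $6$, the only invariant lattices up to homothety are the hexagonal or square lattices, so $\mathcal{F}^{-1}$ is a single point; the cone angles are then not all equal to $\pi$, which is the non-$T^2/\mathbb{Z}_2$ case. If instead $G = \{\pm I\}$, every lattice is invariant, so the invariant lattices modulo homothety and rotation are parameterized by the $j$-line $\mathbb{C}$; the four cone points of angle $\pi$ exhibit $(\mathbb{P}^1,g)$ as a flat $T^2/\mathbb{Z}_2$, the total space is the Kummer surface $\mathrm{Km}(E_{\mathrm{base}} \times E_\Lambda)$, and the $\mathbb{C}$-parameter is the modulus $j(E_\Lambda)$ of the fiber.

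For part~(2), non-isotriviality makes $\tau$ genuinely vary, so $C \neq 0$ and $g$ is not flat; moreover the global relation $\prod_i M_i = I$ among the local monodromies $M_i$ rules out elementary (boundary-fixing) monodromy, so $G$ is non-elementary and hence Zariski-dense in $SL(2,\mathbb{R})$. The crux is a finiteness result: if a Zariski-dense group preserves two lattices then those lattices are commensurable, so every $G$-invariant lattice is commensurable with the fixed $\mathbb{Z}^2$, and up to homothety the $G$-invariant lattices are governed by the $G$-stable subgroups of a bounded finite quotient of $\mathbb{Z}^2$. This produces a finite number $N(G)$, intrinsic to $G$ through its action at the finitely many primes where it fails to be maximal. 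Finally, when $(X,f)$ has $24$ fibers of type $I_1$ the monodromy is generated by $24$ conjugate Dehn twists and is all of $SL(2,\mathbb{Z})$; since $SL(2,\mathbb{Z})$ preserves only $\mathbb{Z}^2$ up to homothety, $N(SL(2,\mathbb{Z})) = 1$.

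The step I expect to be the main obstacle is the finiteness-and-counting in part~(2): pinning down the equivalence relation $\sim$ (homothety together with SKS automorphisms, and its interaction with the twist), and extracting from Zariski-density a clean group-theoretic description of $N(G)$ rather than a mere finiteness bound. In particular one must check that distinct invariant lattices really yield non-isomorphic Jacobian K3 surfaces, so that the lattice count matches $\bigl|\mathcal{F}^{-1}\bigl((\mathbb{P}^1,g)\bigr)\bigr|$. By comparison, the flat/non-flat dichotomy and the isotrivial classification are routine once the identity ``period locally constant $\Leftrightarrow$ $C=0$ $\Leftrightarrow$ flat'' is in place.
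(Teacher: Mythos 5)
Your overall architecture is the paper's: by Theorem~\ref{correspondence} and Proposition~\ref{sks-with-same-metric}, the fiber of \( \mathcal{F} \) is identified with the set of integral structures (monodromy-invariant flat lattices) on the SKS that \( g \) determines up to twist, and your part (1) matches the paper's two cases — an elliptic monodromy element of order \( >2 \) rigidifies the lattice up to rotation (the paper does this via Lemma~\ref{commute-elliptic}), while \( G=\{\pm I\} \) leaves the full \( j \)-line \( \mathbb{C} \) of lattices, realized by Kummer surfaces. The first genuine gap is in part (2): your central finiteness lemma, ``a Zariski-dense group preserving two lattices forces them to be commensurable,'' is false as stated. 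Take \( G \) to be the (finite-index, hence Zariski-dense) group of integer matrices with even upper-right entry, a conjugate of \( \Gamma_0(2) \), and \( P=\begin{pmatrix}\sqrt{2} & 0\\ 0 & 1/\sqrt{2}\end{pmatrix} \): then \( P^{-1}\begin{pmatrix}a&b\\ c&d\end{pmatrix}P=\begin{pmatrix}a&b/2\\ 2c&d\end{pmatrix} \) is integral whenever \( b \) is even, so \( G \) preserves both \( \mathbb{Z}^2 \) and \( P\mathbb{Z}^2 \), yet \( \sqrt{2}\,\mathbb{Z}\cap\mathbb{Z}=0 \), so these lattices intersect trivially and are not commensurable — consistently, the paper's Table~\ref{table:index-group} records \( N(\bar{\Gamma}_0(2))=2 \). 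The correct statement, which is what your homothety-allowing equivalence actually needs, is commensurability \emph{after rescaling}; the paper extracts it from Lemma~\ref{nM2subset}: finite index gives \( nM_2\subset M(G) \), whence the entries of \( P \) satisfy \( \sqrt{n}\,a_i=b_i\sqrt{t} \) and \( \sqrt{n/t}\,P \) is an integer matrix of determinant \( n/t \), and finiteness follows since integer matrices of fixed determinant fall into finitely many classes modulo \( \mathrm{SL}_2(\mathbb{Z}) \). Your Zariski-density hypothesis could substitute for finite index in the span step (via Burnside), but your chain through commensurability does not go through as written; moreover ``the global relation rules out elementary monodromy'' is an assertion, not an argument — the paper simply imports from the literature that non-isotrivial elliptic K3 monodromy groups have finite index (3411 cases), which you effectively need anyway to get the clean count \( N(G) \) rather than bare finiteness.

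The second gap is the one you flag but leave open, and it is exactly where the paper uses Proposition~\ref{twist-give-same-sks}: in the non-flat case a twist by \( \theta\notin\pi\mathbb{Z} \) genuinely changes the SKS, so two integral special pairs of the fixed SKS yield the same Jacobian elliptic K3 surface if and only if they differ by right multiplication by an element of \( \mathrm{SL}_2(\mathbb{Z}) \); this is what makes \( \left|\mathcal{F}^{-1}\left((\mathbb{P}^1,g)\right)\right| \) \emph{equal} to \( N(G)=\#\left\{P\in \mathrm{SL}_2(\mathbb{R})/\mathrm{SL}_2(\mathbb{Z}) : P^{-1}GP\subset \mathrm{SL}_2(\mathbb{Z})\right\} \), with no further quotient: special pairs are already symplectically normalized (related by \( \mathrm{SL}_2(\mathbb{R}) \), not \( \mathrm{GL}_2(\mathbb{R}) \)), so homothety never enters, and no quotient by SKS automorphisms appears — your extra identifications in \( \sim \) would have to be shown not to collapse distinct classes. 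Since Proposition~\ref{twist-give-same-sks} is available in Section~\ref{section-twist}, this second gap is fillable from results you could have cited; but as written your proposal leaves the key counting identity unproven and rests the finiteness on a lemma that is false.
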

	In particular, \( \mathcal{F} \) induces a stratification of the moduli space of Jacobian elliptic K3 surfaces, and it may be an interesting problem to analyze the structure of these strata.
	
	We note that Hashimoto-Ueda \cite{reconstruct} also proved the injectivity of \( \mathcal{F} \) for a general choice of Jacobian elliptic K3 surfaces, using a totally different method.
	
	\

	{\bf Acknowledgements}: The author is indebted to his advisor, Gang Tian, for many insightful discussions and continuous encouragement. He also thank Yibing Chen, Yuxuan Chen, Qing Lan and Zhiyuan Li for helpful conversations, and Yuji Odaka, Mattias Jonsson for kind suggestion on references. The author is supported by National Key R$\&$D Program of China 2023YFA1009900.

	\section{Special Kähler Structures (SKS)}
	We will use the abbreviation SKS throughout this article. In this section, we review some basic results about SKS. The following definition is from Freed \cite{freed}, and we will see later that SKS can be determined by special pairs.

	\begin{defi} [\cite{freed}]
		An SKS on a Riemann surface \( M \) is a Kähler structure \((M, \omega, J)\) together with a torsion-free flat symplectic connection \( \nabla \) satisfying 
		\[
		(\nabla_X J)(Y) = (\nabla_Y J)(X)
		\]
		for any vector fields \( X, Y \).
	\end{defi}
	
	\subsection{Special Pairs}
	By the definition of SKS, locally there exist affine flat Darboux coordinates \( x^1, x^2 \), i.e., \( \nabla(dx^1) = \nabla(dx^2) = 0 \) and \( \omega = dx^1 \wedge dx^2 \).
	
	\begin{lem} [\cite{freed}]  \label{lemma-special-pair}
		Given affine flat Darboux coordinates \( (x^1, x^2) \) as above, we can uniquely determine two holomorphic 1-forms \( (dz, dw) \) such that \( \mathrm{Re}(dz) = dx^1 \) and \( -\mathrm{Re}(dw) = dx^2 \).
	\end{lem}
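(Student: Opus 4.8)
The plan is to treat existence and uniqueness separately, working on the Riemann surface $M$ where the Dolbeault bidegree decomposition of $1$-forms is available. For uniqueness I would first note that on a complex curve a $(1,0)$-form is determined by its real part: if $\beta$ is of type $(1,0)$ with $\mathrm{Re}(\beta)=0$, then $\beta = i\eta$ for the real $1$-form $\eta=\mathrm{Im}(\beta)$; but then $\eta$ is itself of type $(1,0)$, and a real form of pure type $(1,0)$ must vanish, so $\eta=0$ and $\beta=0$. Hence at most one $(1,0)$-form—in particular at most one holomorphic $1$-form—has a prescribed real part, which settles uniqueness once existence and holomorphy are established.

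For existence I would build the candidates explicitly. Writing $J$ also for the induced action on $1$-forms, set $dz := dx^1 - i\,J(dx^1)$ and $dw := -dx^2 + i\,J(dx^2)$. By construction these are of type $(1,0)$ and satisfy $\mathrm{Re}(dz)=dx^1$ and $-\mathrm{Re}(dw)=dx^2$, so the only substantive point is holomorphy. On a Riemann surface a $(1,0)$-form is holomorphic precisely when it is $d$-closed, since $\partial$ of a $(1,0)$-form vanishes for bidegree reasons; thus I must show $dz$ and $dw$ are closed. As $\nabla(dx^1)=\nabla(dx^2)=0$ the forms $dx^1,dx^2$ are already closed, so it remains to prove that $J(dx^1)$ and $J(dx^2)$ are closed.

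This is the crux, and it is exactly where the special condition enters. Using that $\nabla$ is torsion-free, I would write the exterior derivative of a $1$-form $\eta$ as $d\eta(X,Y)=(\nabla_X\eta)(Y)-(\nabla_Y\eta)(X)$. Applying this to $\eta = J(dx^1)$, i.e. $\eta(Y)=dx^1(JY)$, and using $\nabla(dx^1)=0$ together with the Leibniz rule $\nabla_X(JY)=(\nabla_X J)(Y)+J(\nabla_X Y)$, the parallel terms cancel and I expect the clean identity $(\nabla_X\eta)(Y)=dx^1\big((\nabla_X J)(Y)\big)$. Consequently $d\eta(X,Y)=dx^1\big((\nabla_X J)(Y)\big)-dx^1\big((\nabla_Y J)(X)\big)$, which vanishes precisely by the defining symmetry $(\nabla_X J)(Y)=(\nabla_Y J)(X)$ of an SKS. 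The same computation applies verbatim to $dx^2$, so $J(dx^1)$ and $J(dx^2)$ are closed, $dz$ and $dw$ are closed $(1,0)$-forms and hence holomorphic, completing existence.

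The main obstacle I anticipate is bookkeeping rather than conceptual: fixing the sign and normalization conventions for the action of $J$ on $1$-forms and for the $(1,0)$/$(0,1)$ splitting, so that the "real part" condition and the eigenform description are consistent, and confirming that the torsion-free formula for $d\eta$ is applied with the flat connection $\nabla$ (not the Levi-Civita connection). Once these conventions are pinned down, the special Kähler symmetry does all the real work in a single line.
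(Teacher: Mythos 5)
Your proposal is correct and follows essentially the same route as the paper: uniqueness via the fact that $\mathrm{Re}$ is a bijection from $(1,0)$-forms to real $1$-forms, and existence by showing $J(dx^1)$, $J(dx^2)$ are closed using the symmetry $(\nabla_X J)(Y)=(\nabla_Y J)(X)$, so the resulting closed $(1,0)$-forms are holomorphic. The only differences are cosmetic: you run the closedness computation invariantly via the torsion-free formula for $d$, where the paper contracts the symmetric $\partial_l J^1_k$ against $dx^l\wedge dx^k$ in the flat Darboux coordinates, and your sign $dx^1 - \mathrm{i}J(dx^1)$ versus the paper's $dx^1 + \mathrm{i}J(dx^1)$ is just the opposite convention for the action of $J$ on $1$-forms, which you correctly flag.
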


	\begin{proof}
		Note that \( \text{Re} \) gives a bijection from \( T^*_{1,0} M \) to \( T^* M \), and \( dx^1 + \mathrm{i}J(dx^1) \) maps to \( dx^1 \).
		
		We write the complex structure as \( J = J^l_k \partial_{x^l} \otimes dx^k \). By \( (\nabla_X J)(Y) = (\nabla_Y J)(X) \) we have
		\[
		d(J(dx^1)) = d(J^1_k dx^k) = \partial_l J^1_k dx^l \wedge dx^k = 0.
		\]
		Thus \( d(dx^1 + \mathrm{i}J(dx^1))=0 \), and there exist a holomorphic function \( z \), such that \( dz = dx^1 + \mathrm{i}J(dx^1) \), and \( dz \) is uniquely determined by \( dx^1 \). The calculation for \( dw \) is similar.
	\end{proof}
	Denote \( \tau = \frac{dw}{dz} \), then the Kähler form can be written as:
	\[
	\omega =\text{Re}(dw)\wedge \text{Re}(dz)= \frac{1}{2} \text{Re}(dw \wedge d\bar{z}) = \frac{\mathrm{i}}{2} \text{Im}(\tau) \, dz \wedge d\bar{z}.
	\]
	In particular, we have \( \text{Im}(\tau) > 0 \).
	
	Given two holomorphic 1-forms \( (dz, dw) \) with \( \text{Im}(\tau) > 0 \), we can determine \( \nabla \) by \( \nabla ( \text{Re}(dz)) = \nabla ( \text{Re}(dw)) = 0 \). The calculation in Lemma~\ref{lemma-special-pair} can be reversed to derive \( (\nabla_X J)(Y) = (\nabla_Y J)(X) \). Thus, we can determine an SKS by \( (dz, dw) \):

	\begin{lem}\label{holomorphic-coordinate-representation}
		Given two holomorphic functions \( (dz, dw) \) on an open set \( D \subset \mathbb{C} \) with \( \mathrm{Im}(\tau) > 0 \), then there is an SKS on \( D \), with \( (\mathrm{Re}(z), -\mathrm{Re}(w)) \) as affine flat Darboux coordinates.
	\end{lem}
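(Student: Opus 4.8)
The plan is to run the construction preceding the statement in reverse: starting from the data $(dz, dw)$, I would build the symplectic form and the flat connection by hand, and then verify the two defining properties of an SKS, namely positivity/compatibility of $\omega$ and the symmetry of $\nabla J$. Throughout, $J$ denotes the standard complex structure on $D \subset \mathbb{C}$, and I write $dz = p\, d\zeta$, $dw = q\, d\zeta$ for the ambient holomorphic coordinate $\zeta = u + \mathrm{i}v$, so that $\tau = q/p$ and $\mathrm{Im}(\tau) = \mathrm{Im}(\bar p q)/|p|^2$.

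First I would set $x^1 = \mathrm{Re}(z)$ and $x^2 = -\mathrm{Re}(w)$ (well-defined up to additive constants, which affect nothing below) and check that these form a genuine local coordinate system. A direct computation gives $dx^1 \wedge dx^2 = \mathrm{Im}(\bar p q)\, du \wedge dv = |p|^2\, \mathrm{Im}(\tau)\, du \wedge dv$, which is nonvanishing precisely because $\mathrm{Im}(\tau) > 0$ (this also forces $p \neq 0$, so $\tau$ is well-defined). Hence $(x^1, x^2)$ is a chart, and I would define $\omega \coloneq dx^1 \wedge dx^2$. The same computation, reorganized as in the displayed formula before the statement, shows $\omega = \tfrac{\mathrm{i}}{2}\,\mathrm{Im}(\tau)\, dz \wedge d\bar z$, a positive $(1,1)$-form; since it is a top-degree form it is automatically closed, so $(D, \omega, J)$ is a Kähler structure with Riemannian metric $g(\cdot,\cdot) = \omega(\cdot, J\cdot)$.

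Next I would define $\nabla$ to be the flat connection for which $x^1, x^2$ are affine coordinates, equivalently $\nabla(dx^1) = \nabla(dx^2) = 0$. In this chart all Christoffel symbols vanish, so $\nabla$ is automatically flat and torsion-free; and since $\omega = dx^1 \wedge dx^2$ has constant coefficients, $\nabla\omega = 0$, so $\nabla$ is symplectic. The only substantive point is then the special condition $(\nabla_X J)(Y) = (\nabla_Y J)(X)$, which I would obtain by reversing the computation of Lemma~\ref{lemma-special-pair}. Because $z$ is holomorphic, $dz = dx^1 + \mathrm{i}\,J(dx^1)$, so $J(dx^1) = \mathrm{Im}(dz)$ is closed, and likewise $J(dx^2) = -\mathrm{Im}(dw)$ is closed from holomorphicity of $w$. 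Writing $J = J^i_k\,\partial_{x^i}\otimes dx^k$ and expanding $d(J(dx^i)) = \partial_l J^i_k\, dx^l \wedge dx^k = 0$, these two closedness statements are exactly the symmetries $\partial_l J^i_k = \partial_k J^i_l$. As the Christoffel symbols vanish, $(\nabla_{\partial_l} J)(\partial_k) = \partial_l J^i_k\,\partial_{x^i}$, so these symmetries are precisely $(\nabla_X J)(Y) = (\nabla_Y J)(X)$.

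I expect the main (though modest) obstacle to be not any single computation but checking the consistency of the two constructions: one must confirm that the special pair which Lemma~\ref{lemma-special-pair} reconstructs from the Darboux coordinates $(x^1, x^2)$ is the \emph{given} $(dz, dw)$, and not merely some pair sharing the same real parts. This is where holomorphicity of $z, w$ together with the standard complex structure is used decisively: the identity $\eta = \mathrm{Re}(\eta) + \mathrm{i}\,J(\mathrm{Re}(\eta))$ for a $(1,0)$-form $\eta$ shows $dx^1 + \mathrm{i}\,J(dx^1) = dz$ exactly, and the analogous identity recovers $dw$, closing the loop and confirming that the SKS just built has $(dz, dw)$ as its special pair.
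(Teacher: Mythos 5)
Your proposal is correct and is essentially the paper's own argument: the paper defines $\nabla$ by $\nabla(\mathrm{Re}(dz)) = \nabla(\mathrm{Re}(dw)) = 0$ and notes that the computation of Lemma~\ref{lemma-special-pair} reverses (closedness of $J(dx^i) = \pm\mathrm{Im}\,$ of the holomorphic forms giving the symmetry $\partial_l J^i_k = \partial_k J^i_l$, hence $(\nabla_X J)(Y) = (\nabla_Y J)(X)$ in the flat chart), which is exactly what you carry out. You simply make explicit the details the paper leaves implicit, such as the nondegeneracy check $dx^1 \wedge dx^2 = |p|^2\,\mathrm{Im}(\tau)\, du \wedge dv > 0$ and the consistency check that Lemma~\ref{lemma-special-pair} recovers the given pair $(dz, dw)$.
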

	
	\begin{defi}
		Given an SKS on a Riemann surface \( M \), we call a locally defined pair of holomorphic 1-forms \( (dz, dw) \) a \textit{special pair} if \( (\mathrm{Re}(z), -\mathrm{Re}(w)) \) are affine flat Darboux coordinates of the SKS.
	\end{defi}
	
	\subsection{Monodromy Representations}
	
	Given a Riemann surface \( (M, p) \) with an SKS, let \( (dz, dw) \) be a locally defined special pair near $p$. We can extend the special pair \( (dz, dw) \) along any path, so that \( (dz, dw) \) actually gives a special pair in the universal cover of \( M \). It is also convenient to view \( (dz, dw) \) as a multi-valued pair on \( M \). We observe that different special pairs of a given SKS are related by the group \( \mathrm{SL}_2(\mathbb{R}) \).
	
	Consider a loop \( \gamma \) based at \( p \). We can extend the special pair \( (dz, dw) \) along \( \gamma \), resulting in a new special pair \( (d\tilde{z}, d\tilde{w}) \) in a neighborhood of \( p \), we have
	\[
	(d\tilde{z}, d\tilde{w}) = (dz, dw)\begin{pmatrix}
		a & b \\
		c & d
	\end{pmatrix}
	\]
	for some matrix \( \begin{pmatrix} a & b \\ c & d \end{pmatrix} \in \mathrm{SL}_2(\mathbb{R}) \).
	
	It is straightforward to verify that this defines a group homomorphism \( \rho\colon \pi_1(M, p) \to \mathrm{SL}_2(\mathbb{R}) \), such that
	\[
	\gamma \mapsto A_\gamma = \begin{pmatrix} a & b \\ c & d \end{pmatrix}.
	\]
	
	\begin{defi}
		We define the monodromy representation of \( M \) at \( p \) to be the group homomorphism \( \rho \) given above.
	\end{defi}
	
	A representation depends on the choice of special pair \( (dz, dw) \) at \( p \). We say a monodromy representation is integral if its image lies in \( \mathrm{SL}_2(\mathbb{Z}) \), and we say \( (dz, dw) \) is an integral special pair if so. In the integral case, \( \langle dz, dw \rangle \) gives a well-defined lattice on \( T^*_{1,0} M \).

	\begin{defi}
		An integral SKS on a Riemann surface is defined to be a tuple \((M, \omega, J,\nabla,\Lambda) \), where \((M, \omega, J,\nabla) \) gives an SKS on $M$, and \( \Lambda = \langle dz, dw \rangle \) is a lattice on \( T^*_{1,0} M \) generated by an integral special pair \( (dz, dw) \).
	\end{defi}
	By definition, \( (dz, dw) \) and \( (dz, dw)P \) give the same integral SKS on $M$ for $P\in \mathrm{SL}_2(\mathbb{Z}) $. However, an SKS on \( M \) can admit several different integral structures. See Section \ref{injectivity}.
	
	\subsection{Twist Transformations}\label{section-twist}
	
	Given an SKS on a Riemann surface with a special pair \( (dz, dw) \) at some point \( p \), consider the holomorphic 1-forms \( (e^{\mathrm{i}\theta} dz, e^{\mathrm{i}\theta} dw) \), where \( \theta \in \mathbb{R} \). By Lemma~\ref{holomorphic-coordinate-representation}, locally \( (e^{\mathrm{i}\theta} dz, e^{\mathrm{i}\theta} dw) \) defines a new SKS with the same metric. 
	
	It is straightforward to verify that this construction is independent of the choice of special pair, yielding a well-defined SKS on \( M \). Moreover, the monodromy of the new SKS, in the special pair \( (e^{\mathrm{i}\theta} dz, e^{\mathrm{i}\theta} dw) \), is identical to that of the original SKS in the special pair \( (dz, dw) \). We call this construction "twist by \( \theta \)".
	
	We now turn our attention to the metric aspects of SKS. Since we are working on Riemann surfaces, both the complex structure and Kähler structure can be determined from the Riemannian metric. The following two propositions were proved in \cite[Proposition~ 34]{haydyshyper}, using more refined structures related to SKS. We provide a direct proof here.
	
	\begin{prop}\label{sks-with-same-metric}
		Two SKS on a Riemann surface are isometric if and only if they are related by a twist.
	\end{prop}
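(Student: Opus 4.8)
The plan is to prove the two directions separately; the reverse direction is a direct consequence of the construction in Section~\ref{section-twist}, while the forward direction rests on a curvature identity that reconstructs the period map, up to $\mathrm{PSL}_2(\mathbb{R})$, from the metric alone. For the reverse direction, twisting by $\theta$ replaces a special pair $(dz,dw)$ by $(e^{\mathrm{i}\theta}dz, e^{\mathrm{i}\theta}dw)$, which leaves $\tau = dw/dz$ unchanged and leaves $\mathrm{Im}(\tau)\,|dz|^2$ unchanged; hence the metric is preserved, so twist-related structures are isometric. For the forward direction, let $\phi\colon (M_1,g_1)\to(M_2,g_2)$ be an isometry. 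As these are Kähler surfaces of real dimension $2$, $\phi$ is conformal, hence holomorphic or antiholomorphic; I treat the holomorphic case (the antiholomorphic one is identical after replacing one structure by its complex conjugate), so we may take $\phi$ to be a biholomorphism. Pulling back the second structure by $\phi$, I reduce to two SKS on a single Riemann surface $M$ with the same complex structure and the same metric $g$. Passing to the universal cover, I may assume the special pairs $(dz_i,dw_i)$, and hence the periods $\tau_i$ and the coefficients $f_i$ (where $dz_i = f_i\,du$ in a local holomorphic coordinate $u$), are single-valued holomorphic functions, with $\mathrm{Im}(\tau_i)>0$ and $g = |f_i|^2\,\mathrm{Im}(\tau_i)\,|du|^2$ for $i=1,2$.

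The key step is that the pullback $\tau^{*}g_{\mathbb{H}}$ of the hyperbolic metric on the upper half-plane is determined by $g$. Computing the Gauss curvature of $g = \mathrm{Im}(\tau)\,|dz|^2$ in the coordinate $z$ gives $K = |\tau'|^2\bigl/\bigl(2(\mathrm{Im}\,\tau)^3\bigr)>0$, whereas $\tau^{*}g_{\mathbb{H}} = |\tau'|^2(\mathrm{Im}\,\tau)^{-2}\,|dz|^2$; comparing the two yields the identity $\tau^{*}g_{\mathbb{H}} = 2K\,g$. Since both $K$ and $g$ are isometry invariants of the underlying Riemannian surface, this forces $\tau_1^{*}g_{\mathbb{H}} = \tau_2^{*}g_{\mathbb{H}}$. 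Thus $\tau_1,\tau_2\colon M\to\mathbb{H}$ are two holomorphic maps inducing the same pullback metric.

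I would then invoke rigidity for such maps. Away from the isolated critical points of $\tau_i$, each $\tau_i$ is a local isometry onto $\mathbb{H}$, so $\tau_2\circ\tau_1^{-1}$ is locally an orientation-preserving isometry of $\mathbb{H}$, i.e.\ an element of $\mathrm{PSL}_2(\mathbb{R})$; by connectedness and uniqueness of isometric continuation this is a single $\gamma\in\mathrm{PSL}_2(\mathbb{R})$ with $\tau_2 = \gamma\cdot\tau_1$. Realizing $\gamma$ by a change of special pair $P\in\mathrm{SL}_2(\mathbb{R})$ for the first structure, which gives the \emph{same} SKS, I may assume $\tau_1 = \tau_2 =: \tau$. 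The equality of metrics then reads $|f_1|=|f_2|$, so $f_2/f_1$ is a holomorphic function of constant modulus $1$ (the zeros of $f_1$ and $f_2$ coincide with equal orders since $|f_1|=|f_2|$, so the ratio extends holomorphically across them), hence a constant $e^{\mathrm{i}\theta}$. Therefore $(dz_2,dw_2) = (e^{\mathrm{i}\theta}dz_1, e^{\mathrm{i}\theta}dw_1)$, exhibiting the second SKS as the twist of the first by $\theta$.

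The main obstacle is the rigidity and globalization step: controlling the critical points of $\tau$, where it fails to be a local diffeomorphism, and promoting the locally defined isometry $\gamma$ to a single global element of $\mathrm{PSL}_2(\mathbb{R})$ on a possibly non-simply-connected surface. I expect to handle this by working on the universal cover and using that the critical set is discrete, so its complement is connected, together with uniqueness of isometric continuation to show $\gamma$ is locally constant, hence constant. The flat case $K\equiv 0$, equivalently $\tau$ constant, must be treated separately but is easy: then $\tau_1,\tau_2$ are points of $\mathbb{H}$, any $\gamma\in\mathrm{PSL}_2(\mathbb{R})$ carrying one to the other effects the reduction, and the modulus argument again produces the twist.
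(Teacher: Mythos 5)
Your proof is correct, but it takes a genuinely different route from the paper's. The paper argues directly on the ratio $h = dz_2/dz_1$ of the two special pairs: using that $\mathrm{Im}(\tau_2)$ is harmonic in any holomorphic coordinate, it computes $0 = \partial_{\bar z_1}\partial_{z_1}\bigl(\mathrm{Im}(\tau_1)/|h|^2\bigr)$ and extracts the identity $h = c(\tau_1 - r)$ with $r \in \mathbb{R}$, $c \in \mathbb{C}$ (unless $h$ is constant); in either case an $\mathrm{SL}_2(\mathbb{R})$ change of the first pair gives $dz_2 = e^{\mathrm{i}\theta} dz_1$, and then $\mathrm{Im}(\tau_1) = \mathrm{Im}(\tau_2)$ pins down $dw_2$ after one more $\mathrm{SL}_2(\mathbb{R})$ move. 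You instead recover the period map intrinsically: your identity $\tau^* g_{\mathbb{H}} = 2Kg$ is correct (it is the metric avatar of the paper's formula $\mathrm{Ric}(\omega) = \omega_{\mathrm{WP}}$), so the pullback of the hyperbolic metric is an isometry invariant, and hyperbolic rigidity yields $\tau_2 = \gamma \cdot \tau_1$ for a single $\gamma \in \mathrm{PSL}_2(\mathbb{R})$, realized by an $\mathrm{SL}_2(\mathbb{R})$ change of special pair, after which the modulus argument produces the phase. Your route is heavier machinery-wise (rigidity of local isometries of $\mathbb{H}$, the locally-constant argument across the discrete critical set of $\tau$, a separate flat case), but it buys the conceptual statement the paper leaves implicit --- the metric determines the period map up to $\mathrm{PSL}_2(\mathbb{R})$ --- and you are more careful than the paper about reducing an abstract isometry to equal metrics on one surface, including the orientation-reversing case; the paper simply starts from equal Kähler forms, consistent with its convention that isometries in $\mathfrak{M}_2^c$ preserve the complex structure, the antiholomorphic case being exactly the 2-to-1 ambiguity of $\mathfrak{M}_2^c \to \mathfrak{M}_2$. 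Two minor points: $K$ is only nonnegative, vanishing precisely at the critical points of $\tau$ (which you do handle, so the ``$>0$'' is a slip of notation); and your worry about common zeros of $f_1, f_2$ is vacuous, since the $dz$ of a special pair is nowhere vanishing because $\omega = \frac{\mathrm{i}}{2}\mathrm{Im}(\tau)\, dz \wedge d\bar z$ is nondegenerate.
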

	
	\begin{proof}
		We have already shown that a twist does not affect the metric. Now we prove the converse.
		
		Let \( (dz_1, dw_1) \) and \( (dz_2, dw_2) \) be two special pairs defining isometric metrics, then we have
		\[
		\omega = \frac{\mathrm{i}}{2} \text{Im}(\tau_1) \, dz_1 \wedge d\bar{z}_1 = \frac{\mathrm{i}}{2} \text{Im}(\tau_2) \, dz_2 \wedge d\bar{z}_2.
		\]
		
		\

		\textbf{Claim:} After applying a \( \mathrm{SL}_2(\mathbb{R}) \) transformation to \( (dz_1, dw_1) \), we can assume \( dz_2 = e^{\mathrm{i}\theta} dz_1 \) for some \( \theta \in \mathbb{R} \).
		
		Let \( h = \frac{dz_2}{dz_1} \). If \( \partial_{z_1} h \equiv 0 \), then \( h \) is constant, and the claim follows. If \( \partial_{z_1} h \not\equiv 0 \), a calculation shows:
		\[
		0 = \partial_{\bar{z}_1} \partial_{z_1} (\text{Im}(\tau_2)) = \partial_{\bar{z}_1} \partial_{z_1} \left( \frac{\text{Im}(\tau_1)}{|h|^2} \right) = \frac{|\partial_{z_1} h|^2}{|h|^4} \cdot \text{Im}\left( \tau_1 - \partial_{z_1} \tau_1 \frac{h}{\partial_{z_1} h} \right),
		\]
		which implies that \( \tau_1 - \frac{h}{\partial_{z_1} h} \partial_{z_1} \tau_1 = r \) for some constant \( r \in \mathbb{R} \). This leads to:
		\[
		\partial_{z_1} \left( \ln\left( \frac{\tau_1 - r}{h} \right) \right) = 0,
		\]
		so we obtain \( h = c (\tau_1 - r) \) for some constant \( c \in \mathbb{C} \), and \( dz_2 = c \left( dw_1 - r dz_1 \right) \). Thus, by applying an \( \mathrm{SL}_2(\mathbb{R}) \) transformation, we have proved the claim.
		
		%		Note that Im($\tau$)>0, thus $\tau_1-r$ is never 0.
		Now we can assume \( dz_2 = e^{\mathrm{i}\theta} dz_1 \). From the condition \( \frac{\mathrm{i}}{2} \text{Im}(\tau_1) \, dz_1 \wedge d\bar{z}_1 = \frac{\mathrm{i}}{2} \text{Im}(\tau_2) \, dz_2 \wedge d\bar{z}_2 \), we deduce \( \text{Im}(\tau_1) = \text{Im}(\tau_2) \). Thus, \( dw_2 = e^{\mathrm{i}\theta} (dw_1 + r' dz_1) \) for some \( r' \in \mathbb{R} \), and by applying another \( \mathrm{SL}_2(\mathbb{R}) \) transformation to \( (dz_1, dw_1) \), we may assume that \( (dz_2, dw_2) = e^{\mathrm{i}\theta} (dz_1, dw_1) \). This shows that the two SKS are related by a twist by \( \theta \).
	\end{proof}
	
	\begin{prop} \label{twist-give-same-sks}
		Given an SKS on a Riemann surface \( (M, J, \omega, \nabla) \), twisting by \( \theta \) gives the same SKS as the original if and only if \( \theta \in \pi \mathbb{Z} \) or the metric is flat.
	\end{prop}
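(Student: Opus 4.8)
The plan is to reduce the question to the behavior of the single holomorphic invariant $\tau = dw/dz$, which is unchanged (up to a Möbius action) by the choice of special pair. First I would observe that twisting by $\theta$ produces an SKS whose canonical special pair is $(e^{\mathrm{i}\theta}dz, e^{\mathrm{i}\theta}dw)$, while the original has special pair $(dz, dw)$; since both structures live on the same Riemann surface with the same $J$ and $\omega$, they coincide precisely when $(e^{\mathrm{i}\theta}dz, e^{\mathrm{i}\theta}dw)$ is itself a special pair of the \emph{original} SKS. Using the fact recorded earlier that any two special pairs of a fixed SKS on a connected domain differ by a constant matrix in $\mathrm{SL}_2(\mathbb{R})$, "same SKS" becomes equivalent to the existence of $A = \begin{pmatrix} a & b \\ c & d\end{pmatrix} \in \mathrm{SL}_2(\mathbb{R})$ with $(e^{\mathrm{i}\theta}dz, e^{\mathrm{i}\theta}dw) = (dz, dw)A$.

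The second ingredient is a curvature computation identifying flatness with constancy of $\tau$. Since $\tau$ is holomorphic and $\omega = \tfrac{\mathrm{i}}{2}\mathrm{Im}(\tau)\,dz\wedge d\bar z$ with $\mathrm{Im}(\tau) > 0$ (so $dz$ is nowhere zero on a genuine SKS), writing $\lambda = \mathrm{Im}(\tau)$ and using the harmonicity of $\lambda$ gives $\partial_z\partial_{\bar z}\log\lambda = -|\partial_z\lambda|^2/\lambda^2$, hence a nonnegative Gauss curvature $K = |\tau'|^2/(2\lambda^3)$ that vanishes exactly where $\tau' \equiv 0$. Thus the metric is flat if and only if $\tau$ is constant, and because $\tau$ transforms by a Möbius transformation under $\mathrm{SL}_2(\mathbb{R})$ this criterion is independent of the special pair.

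With these in hand the forward direction is short: from $(e^{\mathrm{i}\theta}dz, e^{\mathrm{i}\theta}dw) = (dz, dw)A$ I would divide by $dz$ to obtain $e^{\mathrm{i}\theta} = a + c\tau$ pointwise. If $c \neq 0$ this forces $\tau = (e^{\mathrm{i}\theta} - a)/c$ to be constant, so the metric is flat; if $c = 0$ then $e^{\mathrm{i}\theta} = a \in \mathbb{R}$ has modulus one, giving $e^{\mathrm{i}\theta} = \pm 1$ and $\theta \in \pi\mathbb{Z}$. For the converse, $\theta \in \pi\mathbb{Z}$ yields $A = \pm I \in \mathrm{SL}_2(\mathbb{R})$, so the twist returns the same SKS; and if the metric is flat I would choose a special pair with $\tau \equiv \tau_0 = \tau_1 + \mathrm{i}\tau_2$ ($\tau_2 > 0$) and exhibit the explicit matrix $A = \begin{pmatrix} \cos\theta - \tau_1\sin\theta/\tau_2 & -|\tau_0|^2\sin\theta/\tau_2 \\ \sin\theta/\tau_2 & \cos\theta + \tau_1\sin\theta/\tau_2\end{pmatrix}$, which one checks lies in $\mathrm{SL}_2(\mathbb{R})$ and satisfies $(e^{\mathrm{i}\theta}dz, e^{\mathrm{i}\theta}dw) = (dz, dw)A$ — geometrically this is the stabilizer of $\tau_0$ in the upper half-plane, a copy of $\mathrm{SO}(2)$ inside $\mathrm{SL}_2(\mathbb{R})$. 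I expect the main subtlety to be organizing both directions through the single scalar identity $e^{\mathrm{i}\theta} = a + c\tau$: it is the reality of $a$ that forces $e^{\mathrm{i}\theta}\in\{\pm 1\}$ precisely when $c = 0$, which is what makes the dichotomy "$\theta \in \pi\mathbb{Z}$ or flat" sharp, while verifying $\det A = 1$ in the flat case is the one genuinely computational point.
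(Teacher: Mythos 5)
Your proposal is correct, and its forward direction is essentially the paper's argument: both reduce ``twisting by $\theta$ gives the same SKS'' to the existence of a constant $A \in \mathrm{SL}_2(\mathbb{R})$ with $(e^{\mathrm{i}\theta}dz, e^{\mathrm{i}\theta}dw) = (dz,dw)A$, and both extract the scalar identity $e^{\mathrm{i}\theta} = a + c\tau$ (the paper writes $e^{\mathrm{i}\theta}dz = r_1\,dz + r_2\,dw$), so that $c \neq 0$ forces $\tau$ constant and the metric flat, while $c = 0$ forces $e^{\mathrm{i}\theta} = \pm 1$, i.e.\ $\theta \in \pi\mathbb{Z}$. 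Where you genuinely diverge is the converse in the flat case: the paper invokes Proposition~\ref{sks-with-same-metric} to reduce, locally, to the standard SKS on $\mathbb{R}^2$ with special pair $(z, \mathrm{i}z)$, whose twist-invariance is the one-line identity $e^{\mathrm{i}\theta}(dz,\mathrm{i}dz) = (dz,\mathrm{i}dz)R_\theta$; you instead establish flat $\Leftrightarrow$ $\tau$ constant via the Gauss curvature $K = |\tau'|^2/(2\lambda^3)$ (which is correct, and consistent with the paper's formula $\mathrm{Ric}(\omega) = -\partial\bar\partial\log\mathrm{Im}(\tau)$) and then exhibit the stabilizer of $\tau_0$ in $\mathrm{SL}_2(\mathbb{R})$ explicitly --- your matrix checks out: $a + c\tau_0 = e^{\mathrm{i}\theta}$ and $b + d\tau_0 = e^{\mathrm{i}\theta}\tau_0$ hold, $\det A = \cos^2\theta + \tau_2^2(\sin\theta/\tau_2)^2 = 1$, and at $\tau_0 = \mathrm{i}$ it specializes exactly to the paper's $R_\theta$. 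Your route makes Proposition~\ref{twist-give-same-sks} self-contained, not resting on the isometry classification of Proposition~\ref{sks-with-same-metric} (which is the harder result), at the cost of the curvature computation; the paper's route is shorter given that Proposition~\ref{sks-with-same-metric} is already in hand, since normalizing $\tau_0 = \mathrm{i}$ makes the stabilizer visibly $\mathrm{SO}(2)$. One point both you and the paper gloss, harmlessly: the matrix $A$ is only locally constant (it may vary by monodromy on a non--simply-connected $M$), but since the dichotomy $c \neq 0$ versus $c = 0$ is decided near each point and $\theta \notin \pi\mathbb{Z}$ rules out $c = 0$ everywhere, flatness still follows globally.
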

	
	\begin{proof}
		If \( \theta = k \pi \) for some integer \( k \), the twist does not affect the SKS. If the metric is flat, then by Proposition~\ref{sks-with-same-metric}, locally the SKS is related by a twist to the standard SKS on \( \mathbb{R}^2 \), given by the special pair \( (z, \mathrm{i}z) \). It follows that the standard SKS is invariant under twists.
		
		Conversely, if the twist by \( \theta \notin \pi \mathbb{Z} \) gives the same SKS as the original, then we have \( e^{\mathrm{i}\theta} dz = r_1 dz + r_2 dw \) for some real numbers \( r_1, r_2 \) with \( r_2 \neq 0 \). Therefore, \( dw = c dz \) for some constant \( c \in \mathbb{C} \), and the Kähler form given by \( \frac{1}{2} \text{Re}(dw \wedge d\bar{z}) \) leads to a flat metric.
	\end{proof}
	
	\begin{rmk}
		Twisting by $\frac{\pi}{2}$ is equivalent to the notion of Legendre transform (e.g. \cite{loftin}), which plays an important role to mirror symmetry theory.  
	\end{rmk}
	
	\subsection{Relation with Elliptic Fibrations: Regular Part}
	
	A classical result shows that integral SKS naturally arise in the base space of an algebraic integrable system \cite{freed}, and this procedure can be reversed. In mathematical terms, this leads to the following proposition. See also \cite{Chen2019CollapsingRM,markgross97,GW}.
	
	\begin{prop}\label{bijection-regular-region}  
		Let \( B \) be a Riemann surface. There is a bijection between the following sets:
		\begin{enumerate}
			\item Integral SKS on \( B \).
			\item Holomorphic torus fibration \( f\colon X \to B \) with a zero-section together with a holomorphic volume form \( \Omega \) on \( X \).
		\end{enumerate}
		Moreover, twisting by \( \theta \) of SKS on \( B \) corresponds to multiplying \( e^{\mathrm{i} \theta} \) to \( \Omega \).
	\end{prop}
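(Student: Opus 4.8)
The plan is to construct the two maps explicitly, with the canonical symplectic form on a cotangent bundle serving as the bridge between the analytic data (the special pair) and the geometric data (the fibration), and then to verify that they are mutually inverse.

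\textbf{From SKS to fibration.} Starting with an integral SKS on $B$, the integrality of the monodromy guarantees that the lattice $\Lambda=\langle dz,dw\rangle$ generated by an integral special pair is globally well-defined inside the holomorphic cotangent bundle $T^*_{1,0}B$: an $\mathrm{SL}_2(\mathbb Z)$ change of special pair leaves $\Lambda$ unchanged. I would set $X=T^*_{1,0}B/\Lambda$ with $f\colon X\to B$ the projection, so that the zero section of $T^*_{1,0}B$ descends to a holomorphic zero-section. The condition $\mathrm{Im}(\tau)>0$ ensures that $\Lambda$ has rank two in each fiber $T^*_{1,0,b}B\cong\mathbb C$, so the fibers are genuine complex tori. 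Writing $\eta$ for the linear fiber coordinate and $\lambda=\eta\,db$ for the tautological $1$-form, the form $\Omega_{\mathrm{can}}=d\lambda=d\eta\wedge db$ is a holomorphic symplectic form invariant under the fiberwise translations by $\Lambda$, hence descends to a holomorphic volume form $\Omega$ on $X$.

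\textbf{From fibration to SKS.} Conversely, given $f\colon X\to B$ with zero-section $\sigma$ and nowhere-vanishing holomorphic volume form $\Omega$, the form $\Omega$ is a holomorphic symplectic form on the surface $X$, and each fiber, being a curve, is automatically Lagrangian. Contraction therefore yields an isomorphism $T_{X/B}\xrightarrow{\sim}f^*T^*_{1,0}B$, $v\mapsto\iota_v\Omega$, between the vertical tangent bundle and the pullback of the base cotangent bundle. Restricting to a fiber and using the group structure with origin $\sigma(b)$ to identify $H_1(X_b,\mathbb Z)$ with the period lattice in $T_0X_b$, this map sends the homology lattice of the fibers to a lattice $\Lambda\subset T^*_{1,0}B$. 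A local flat basis $\gamma_1,\gamma_2$ of the local system $R_1f_*\mathbb Z$ then produces two holomorphic $1$-forms $(dz,dw)$ on $B$, and the $\mathrm{SL}_2(\mathbb Z)$-monodromy of $R_1f_*\mathbb Z$ reproduces the integral monodromy of the special pair. By Lemma~\ref{holomorphic-coordinate-representation} these data assemble into an integral SKS once one checks $\mathrm{Im}(\tau)>0$, which is exactly the statement that the fiber is a positively oriented complex torus.

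It then remains to check that the two constructions are mutually inverse — transparent in the quotient model, where the period lattice of $T^*_{1,0}B/\Lambda$ recovers $\Lambda$ and the descended form recovers $\Omega_{\mathrm{can}}$ — and to read off the twist statement: scaling $\Omega\mapsto e^{\mathrm i\theta}\Omega$ scales the identification $\iota_v\Omega$, hence the period $1$-forms, by $e^{\mathrm i\theta}$, which is precisely a twist by $\theta$ in the sense of Section~\ref{section-twist}.

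\textbf{Main obstacle.} I expect the crux to lie in the backward direction: verifying that the period $1$-forms $(dz,dw)$ genuinely form a \emph{special pair} rather than merely a pair of holomorphic $1$-forms. Concretely, one must show they are closed (so that the induced connection is flat and torsion-free), which amounts to the Arnold--Liouville action coordinates built from $\Omega$ producing the affine-flat Darboux coordinates $(\mathrm{Re}\,z,-\mathrm{Re}\,w)$ of an SKS. This is where the zero-section is indispensable: without it the fibration is only a torsor under its Jacobian and the periods are defined merely up to an additive ambiguity, so $\sigma$ is exactly what rigidifies the action coordinates and renders $\Lambda\subset T^*_{1,0}B$ globally well-defined. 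Establishing holomorphicity and closedness of the periods, together with the semiflat identification $X\cong T^*_{1,0}B/\Lambda$ as complex manifolds, is the technical heart of the argument.
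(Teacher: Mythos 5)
Your proposal is correct and follows essentially the same route as the paper: both directions hinge on the semiflat model \( X = T^*B/\Lambda \) with the canonical form \( \Omega_{\mathrm{can}} \) descending because the lattice sections \( a\,dz + b\,dw \) are closed, and the twist statement is read off the same way. The only difference is one of packaging: where you sketch the period-lattice/action-coordinate construction (contraction \( v \mapsto \iota_v\Omega \), periods of \( R_1f_*\mathbb{Z} \), the semiflat identification) and correctly flag it as the technical heart, the paper outsources exactly this step to Gross \cite[sections 2 and 7]{markgross97}, which supplies the unique zero-section-preserving \( h\colon T^*B \to X \) with \( h^*\Omega = \Omega_{\mathrm{can}} \); note also that on a one-dimensional base the closedness you worry about is automatic, since holomorphic \( 1 \)-forms on a Riemann surface are closed.
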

	\begin{proof} Let \( f\colon X \to B \) be a holomorphic torus fibration over a Riemann surface with a zero-section and a holomorphic volume form \( \Omega \). From \cite[sections 2 and 7]{markgross97}, there is uniquely a holomorphic map \( h\colon T^*B \to X \) that preserves the zero-section such that \( h^* \Omega = \Omega_{\text{can}} \), where \( \Omega_{\text{can}} \) is the canonical holomorphic volume form on \( T^*B \). Thus, we can write \( X = T^*B / \Lambda \), where \( \Lambda \) is a holomorphic lattice generated by two multi-valued holomorphic 1-forms \( dz, dw \). We can assume that \( \text{Im}\left( \frac{dw}{dz} \right) > 0 \), so \( (dz, dw) \) defines an integral SKS on \( B \).
		
		Conversely, given an integral SKS on \( B \) with \( \Lambda = \langle dz, dw \rangle \). We can then recover the space \( X = T^*B / \Lambda \). The canonical holomorphic volume form $\Omega$ on $T^*B$ is invariant under translation of $adz+bdw$, thus gives a canonical holomorphic volume form on $X$.      
	\end{proof}
	
	Recall that the Kähler form of the SKS on \( B \) is $\omega = \frac{\mathrm{i}}{2} \text{Im}(\tau) \, dz \wedge d\bar{z},$ where \( \tau = \frac{dw}{dz} \). The curvature is calculated to be 
	\[
	\text{Ric}(\omega) = - \partial \bar{\partial} \log (\text{Im}(\tau)) = \omega_{\text{WP}},
	\]
	where \( \omega_{\text{WP}} \) is the induced Weil-Petersson metric on \( B \). Thus, \( \omega \) is a generalized Kähler--Einstein metric \cite{songtian} with respect to \( f: X \to B \).

	\section{Structures of 2-Dimensional Collapsing Limit Spaces}\label{structure-of-limit-space}
	
	\subsection{Singularity Structures}
	
	In \cite{sunzhang}, Sun--Zhang classified all local singular SKS models that can appear as Gromov--Hausdorff limits of hyperkähler metrics. We adopt their definition of singular SKS as follows. For other types of singularities, see \cite{haydyslocal,hay15,haydyshyper}.
	
	Denote
	\[
	R_\theta \coloneq \begin{pmatrix*}[r]
		\cos( \theta) & -\sin( \theta) \\
		\sin(\theta) & \cos( \theta)
	\end{pmatrix*}, \quad
	I_r \coloneq \begin{pmatrix}
		1 & r \\
		0 & 1
	\end{pmatrix}.
	\]
	
	\begin{defi}  [Definition~3.19 of \cite{sunzhang} ] \label{definition-singularity}
		A singular SKS on a Riemann surface \( M \) is an SKS on \( M \setminus \{ p_1, \dots, p_n \} \), such that for each \( p_k \), we can take a small disk \( \Delta \) containing \( p_k \) and a holomorphic coordinate \( \zeta \) on \( \Delta \) with \( \zeta(p_k) = 0 \), and the SKS on \( \Delta^* = \Delta \setminus \{ 0 \} \) is described in Table~\ref{table-singular-model}.

		\begin{table}[h]
			\centering
			\caption{Singular Models}
			\label{table-singular-model}
			\renewcommand\arraystretch{1.2}
			\begin{tabular}{|c|c|l|c|}
				\hline
				\textbf{Type} & \textbf{Kähler Form} & \qquad \quad \textbf{Special Pair}& \textbf{Monodromy} \\
				\hline
				(1) & 
				\( - \frac{\mathrm{i} }{2} \left( \log |\zeta| + \text{Im}(f) \right) d\zeta \wedge d\bar{\zeta} \) & 
				\( \begin{array}{l}
					dz = d\zeta \\
					dw = -\left( \mathrm{i} \log \zeta + f \right) d\zeta \\
					\tau = -\mathrm{i} (\log \zeta + f)
				\end{array} \) & 
				\( I_{2\pi} \) \\
				\hline
				(2) & 
				\( -\frac{\mathrm{i} }{2|\zeta|} \left( \log |\zeta| + \text{Im}(f) \right) d\zeta \wedge d\bar{\zeta} \) & 
				\( \begin{array}{l}
					dz = \zeta^{-\frac{1}{2}} d\zeta \\
					dw = -\left( \mathrm{i} \log \zeta + f \right) \zeta^{-\frac{1}{2}} d\zeta \\
					\tau = -(\mathrm{i} \log \zeta + f)
				\end{array} \) & 
				\( -I_{2\pi} \) \\
				\hline
				(3) & 
				\( \frac{\mathrm{i} }{2} \left( 1 - |f|^2 \right) |\zeta|^{2\beta-2} d\zeta \wedge d\bar{\zeta} \) & 
				\( \begin{array}{l}
					dz = (1 - f) \zeta^{\beta-1} d\zeta \\
					dw = \mathrm{i} (1 + f) \zeta^{\beta-1} d\zeta \\
					\tau = \mathrm{i} \frac{1 + f}{1 - f}
				\end{array} \) & 
				\( R_{2\pi\beta} \) \\
				\hline
			\end{tabular}
		\end{table}
		
		In cases (1) and (2), \( f \) is a holomorphic function on \( \Delta \).
		
		In case (3), we require \( \beta \in \left\{ \frac{1}{2}, \frac{1}{3}, \frac{2}{3}, \frac{1}{4}, \frac{3}{4}, \frac{1}{6}, \frac{5}{6} \right\} \), and \( f \) is a multivalued holomorphic function on \( \Delta \) of  the following form (where $g$ is a holomorphic function on $\Delta$): 
		
		\begin{itemize}
			\item If \( \beta = \frac{1}{2} \), then \( f \) is a holomorphic function on \( \Delta \) with \( |f| < 1 \).
			\item If \( \beta = \frac{1}{4}, \frac{3}{4} \), then \( f=z^\frac{1}{2}g\).
			\item If \( \beta = \frac{1}{3}, \frac{5}{6} \), then \( f=z^\frac{1}{3}g\).
			\item If \( \beta =  \frac{2}{3}, \frac{1}{6} \), then \( f=z^\frac{2}{3}g\).
		\end{itemize}

		In the third column, the monodromy refers to the monodromy along a counterclockwise small circle centered at \( p_k \).
	\end{defi}

	\begin{rmk}
		Take $\beta=\frac{1}{3}$, for example, \cite[Definition~3.19]{sunzhang} only assumes that $f=F^\frac{1}{3}$ for some holomorphic $F$ with $F(0)=0$, but since the monodromy is a linear transformation, we must have $f=z^\frac{1}{3}g$ as above. The same holds for other values of  $\beta$.
	\end{rmk}
	
	\begin{rmk} \label{property-singularity}
		The singularities described in Definition~\ref{definition-singularity} satisfy the following properties:
		
		\begin{enumerate}
			\item The diameter of these singular models is finite.
			\item The monodromy of these singularities is conjugate to an integral matrix.
			\item The tangent cone at these singularities is unique and is a flat cone with cone angle \( \theta \in (0, 2\pi] \).
		\end{enumerate}
	\end{rmk}
	
	\begin{rmk}\label{positivity-singularity}
		From Table~\ref{table-singular-model}, we observe that the monodromy of a singularity is never \( \mathrm{SL}_2(\mathbb{R}) \) conjugated to \( \pm I_{-1} \). See, e.g., \cite[Lemmas 3.17 and 3.18]{sunzhang} for more details on the conjugacy classes of \( \mathrm{SL}_2(\mathbb{R}) \) and \( \mathrm{SL}_2( \mathbb{Z}) \).
	\end{rmk}
	
	\begin{rmk}
		
		Given a singular SKS on a compact Riemann surface \((M,\omega) \), we known that $\text{Ric}(\omega) = \omega_{\text{WP}}\ge0,$ and the cone angle at singularities is not greater than $2\pi$. Thus, by the Gauss-Bonnet formula, \( M \) is either homeomorphic to \( \mathbb{P}^1 \) or a flat torus.  ($cf.$ \cite[Theorem~2]{Lu})
		
	\end{rmk}

	\subsection{Integrality of SKS}\label{section-integrality}
	
	In \cite{sunzhang}, Sun--Zhang proved the following:
	
	\begin{thm} \label{thm-sun-zhang-limit-space}
		Let \((X_\infty, g_\infty)\) be a 2-dimensional Gromov--Hausdorff limit space of hyperkähler metrics of K3 surfaces. Then \((X_\infty, g_\infty)\) is isometric to a singular SKS on \( \mathbb{P}^1 \).
	\end{thm}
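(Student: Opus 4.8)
The plan is to build the singular special Kähler structure directly out of the collapsing hyperkähler geometry, treating first the regular part and then the finitely many concentration points, and finally pinning down the global topology by Gauss--Bonnet.

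First I would set up the regular part. Fix a sequence of hyperkähler metrics $(X_i, g_i)$ on K3 surfaces converging to $(X_\infty, g_\infty)$. By the $\epsilon$-regularity theorem of Cheeger--Tian the $L^2$ curvature concentrates near finitely many points $\{q_1,\dots,q_n\}$, and by Naber--Tian the complement $X_\infty \setminus \{q_1,\dots,q_n\}$ carries a smooth Riemannian orbifold structure, with the convergence occurring with locally bounded curvature. On such regions I would invoke Cheeger--Fukaya--Gromov collapse theory: since the real dimension drops from $4$ to $2$, the generic collapsing fibers are flat $2$-tori and the metrics $g_i$ are, up to controlled error, invariant under a local $T^2$-action whose orbits are these fibers.

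Next I would extract the special pair from periods. Writing the hyperkähler data as a complex structure $J$ together with the holomorphic symplectic form $\Omega = \omega_J + \mathrm{i}\,\omega_K$, the collapsing tori are, after a rotation of $\Omega$, special Lagrangian. Choosing a local integral basis $\gamma_1,\gamma_2$ of $H_1$ of a fiber and integrating $\Omega$ over them yields two multivalued holomorphic functions $z,w$ on the base whose differentials $dz,dw$ are holomorphic $1$-forms, single-valued up to the monodromy action; positivity of the fiber area gives $\mathrm{Im}(dw/dz) > 0$. By Lemma~\ref{holomorphic-coordinate-representation} this defines an SKS on $X_\infty \setminus \{q_1,\dots,q_n\}$ whose induced Kähler metric is exactly $g_\infty$, which is precisely the limiting incarnation of Proposition~\ref{bijection-regular-region}. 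Then, near each $q_k$, the tangent cone is a flat $2$-dimensional cone of angle in $(0,2\pi]$, and the local monodromy of the special pair lies in $\mathrm{SL}_2(\mathbb{R})$, constrained by finite curvature energy and the prescribed rate of area collapse; matching the admissible local behaviors against the period asymptotics forces each $q_k$ into one of the three models of Table~\ref{table-singular-model}. Since each such model has finite diameter (Remark~\ref{property-singularity}), the limit is compact; and because $\mathrm{Ric}(\omega) = \omega_{\mathrm{WP}} \ge 0$ with all cone angles at most $2\pi$, the Gauss--Bonnet formula shows the base is either $\mathbb{P}^1$ or a flat torus. A flat torus would force vanishing total curvature and zero angle deficits, hence a smooth isotrivial fibration of Euler characteristic $0$, contradicting $\chi(\mathrm{K3}) = 24$; therefore the base is $\mathbb{P}^1$.

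I expect the main obstacle to be the singularity classification. One must establish uniqueness of the flat tangent cone at each $q_k$, read off the local monodromy from the degenerating periods, and verify that the asymptotics reproduce precisely the forms of $f$ and the monodromy types recorded in Table~\ref{table-singular-model}. This demands delicate estimates on the hyperkähler metric near the degenerating fibers and is the technical heart of the argument; by contrast, the construction of the SKS on the regular part and the Gauss--Bonnet topology step are comparatively routine once the collapse structure is in hand.
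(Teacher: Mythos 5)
You should first note that the paper does not prove this statement at all: it is quoted verbatim as a result of Sun--Zhang \cite{sunzhang}, so there is no internal proof to compare against, and the only question is whether your sketch would constitute a self-contained proof. It would not. Your outline correctly reproduces the architecture of Sun--Zhang's argument (Cheeger--Tian \( \epsilon \)-regularity to locate \( \{q_1,\dots,q_n\} \), Naber--Tian regularity off those points, Cheeger--Fukaya--Gromov \( N \)-structures giving approximating torus-invariant metrics, special pairs on the regular part, Gauss--Bonnet to identify the base), but the step carrying essentially all of the content --- that each concentration point is modelled on exactly one of the three families of Table~\ref{table-singular-model} --- is stated as an expectation (``matching the admissible local behaviors against the period asymptotics forces each \( q_k \) into one of the three models'') rather than argued. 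Concretely missing: uniqueness of the flat tangent cone at each \( q_k \); why the local monodromy must be \( I_{2\pi} \), \( -I_{2\pi} \), or elliptic \( R_{2\pi\beta} \) with \( \beta \) in the specific finite set \( \{\frac12,\frac13,\frac23,\frac14,\frac34,\frac16,\frac56\} \), ruling out hyperbolic monodromies, \( \pm I_{-r} \), and irrational cone angles; and why the auxiliary function \( f \) takes the precise multivalued normal forms listed. ``Finite curvature energy and the prescribed rate of area collapse'' is not by itself a mechanism that excludes these alternatives; establishing it requires the delicate weighted estimates on the degenerating hyperkähler metric near \( q_k \) that are the technical core of \cite{sunzhang}, and your sketch supplies no substitute.

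A secondary gap: your extraction of the special pair presumes that the CFG collapsing fibers are (after rotating \( \Omega \)) special Lagrangian tori and that their period integrals converge to holomorphic \( 1 \)-forms on the limit. The CFG fibration \( F_j\colon Q_j \to Q \) is a priori only a smooth fiber bundle; to make the periods well defined and convergent one needs the quantitative approximation of \( g_i \) by \( N \)-invariant hyperkähler metrics together with smooth Cheeger--Gromov convergence of the resulting SKS data --- exactly the content of Proposition~\ref{converge}, i.e.\ \cite[Theorem~3.27]{sunzhang} --- which you invoke implicitly but do not prove. By contrast, your concluding topology step is sound and matches the known route: once \( \mathrm{Ric}(\omega)=\omega_{\mathrm{WP}}\ge 0 \) and all cone angles are at most \( 2\pi \), Gauss--Bonnet leaves only \( \mathbb{P}^1 \) or a flat torus, and the torus is excluded by \( \chi(\mathrm{K3})=24 \) (or simple connectivity of K3), since a flat torus limit would force an everywhere-regular fibration.
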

	
	In this section, we will prove the singular SKS on \( \mathbb{P}^1 \) has integral monodromy in a special pair $(dz,dw)$, thereby confirming Conjecture 7.4 of \cite{sunzhang}.

	We first recall some of the arguments in \cite{sunzhang}. Let \((X_\infty, g_\infty)\) be as in Theorem~\ref{thm-sun-zhang-limit-space}, and let \( R \) denote the regular part of \( X_\infty \). Furthermore, let \( Q \subset\subset R \) be a compact connected domain with a smooth boundary. By the results of Cheeger-Fukaya-Gromov \cite{fucayacheegergromov}, for sufficiently large $j$, there exists a fiber bundle map \( F_j\colon Q_j \to Q \) for some \( Q_j \subset X_j^4 \). The hyperkähler metric on \( Q_j \) can be approximated by an \( \mathcal{N} \)-invariant hyperkähler metric, and these metrics define a sequence of integral SKS structures on \( Q \), converging smoothly to the limit metric on \( Q \). In summary, we have:
	\begin{prop} [{\cite[Theorem~3.27]{sunzhang}}] \label{converge}
		On any connected compact domain \( Q \subset\subset R \) with smooth boundary, there exists a sequence of integral SKS on \( Q \) that converges to \( g_\infty \) in the \( C^\infty \) Cheeger-Gromov sense.
	\end{prop}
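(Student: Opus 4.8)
The plan is to reconstruct Sun--Zhang's argument in three stages matching the statement: exhibit a torus fibration of large members $X_j^4$ over the fixed domain $Q$, average the hyperkähler data into an invariant model that realizes $Q$ as the base of an algebraic integrable system and hence carries a genuine integral SKS, and finally deduce $C^\infty$ Cheeger--Gromov convergence to $g_\infty$.

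First I would secure uniformly bounded geometry over $Q$. Since $Q\subset\subset R$ stays away from the finitely many curvature-concentration points, the $\epsilon$-regularity theorem \cite{cheegertian} combined with Ricci-flatness gives uniform curvature bounds, and elliptic regularity for the Einstein equation (as in \cite{anderson,naber-tian-orbifold}) upgrades these to uniform bounds on all covariant derivatives over $Q_j:=F_j^{-1}(Q)$. This places us in the regime of collapse with locally bounded curvature, so by Cheeger--Fukaya--Gromov \cite{fucayacheegergromov} there is, for $j$ large, a fibration $F_j\colon Q_j\to Q$ equipped with a nilpotent Killing ($\mathcal N$-)structure whose fibers are infranilmanifolds. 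As the Hausdorff dimension drops from $4$ to $2$ (by Theorem~\ref{thm-sun-zhang-limit-space}), the fibers are flat $2$-tori, and the integral homology $H_1$ of a fiber is canonically the rank-two lattice $\mathbb Z^2$.

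Next I would build the invariant model and read off the SKS. Because the three Kähler forms of the hyperkähler structure are parallel, they are preserved by local isometries, so averaging the metric over the approximate torus orbits of the $\mathcal N$-structure produces an $\mathcal N$-invariant metric $g_j'$ that is $C^\infty$-close to $g_j$ and keeps the averaged hyperkähler triple closed and invariant. With respect to one of the complex structures, the averaged structure realizes $Q_j$ locally as a holomorphic torus fibration over $Q$ with Lagrangian fibers and a holomorphic volume form, i.e.\ an algebraic integrable system in the sense of \cite{freed}; by the correspondence of Proposition~\ref{bijection-regular-region} this induces an SKS on $Q$ whose special pair $(dz,dw)$ is the period pair $\bigl(\int_{\gamma_1}\Omega,\ \int_{\gamma_2}\Omega\bigr)$ over a basis $\gamma_1,\gamma_2$ of $H_1(T^2,\mathbb Z)$ and whose flat connection $\nabla$ is the Gauss--Manin connection. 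Since parallel transport acts on $H_1(T^2,\mathbb Z)\cong\mathbb Z^2$, the monodromy lies in $\mathrm{SL}_2(\mathbb Z)$ and the lattice $\Lambda=\langle dz,dw\rangle$ is the image of the integral period map, so each $g_j'$ carries an \emph{integral} SKS on $Q$. The required $C^\infty$ Cheeger--Gromov convergence of these integral SKS to $g_\infty$ then follows from the convergence of the descended invariant metrics furnished by the Cheeger--Fukaya--Gromov theory, the fibrations being smooth and the averaged structures converging with all derivatives.

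The main obstacle, where I expect the genuine work to concentrate, is the averaging step: one must show that the approximation provided by \cite{fucayacheegergromov} can be made compatible with the hyperkähler triple so that the invariant limit is an honest SKS --- equivalently that the averaged $\omega_1$ remains closed and the averaged complex structure satisfies $(\nabla_X J)Y=(\nabla_Y J)X$ --- while controlling the $g_j$-to-$g_j'$ error in every $C^k$-norm. This is exactly where the rigidity of the Ricci-flat equation and the detailed structure theory of \cite{sunzhang} are needed; once the invariant model is in place, integrality is a formal consequence of working with the integral homology lattice of the torus fibers.
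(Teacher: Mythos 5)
Your proposal takes essentially the same approach as the paper, which does not reprove this statement but recalls Sun--Zhang's argument in precisely these terms: the Cheeger--Fukaya--Gromov fibration \( F_j\colon Q_j \to Q \) over the fixed domain \( Q \subset\subset R \), approximation of the hyperkähler metric on \( Q_j \) by an \( \mathcal{N} \)-invariant hyperkähler metric, and the resulting integral SKS structures on \( Q \) (with integrality coming from the \( H_1 \)-lattice of the torus fibers) converging smoothly to \( g_\infty \). Your identification of the invariant-approximation step as the technical core is exactly the point the paper defers to \cite[Theorem~3.27]{sunzhang} rather than carrying out itself.
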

	
	Let \((dz_i, dw_i)\) be a sequence of integral special pairs given by Proposition~\ref{converge}, and let \( \tau_i = \frac{d w_i}{d z_i} \). Choose a base point \( q \in Q \). By applying a transformation in \( \mathrm{SL}_2(\mathbb{Z}) \), we can select \((dz_i, dw_i)\) such that \( \tau_i(q) \) lies within the fundamental domain of \( \mathrm{SL}_2(\mathbb{Z}) \backslash \mathcal{H} \), which is known to be:
	\[
	\mathcal{D} = \left\{ z \in \mathbb{H} \mid -\frac{1}{2} \le \text{Re}(z) \le \frac{1}{2}, \quad |z| \ge 1 \right\}.
	\]
	
	Let \( \lambda_i = \sqrt{\text{Im}(\tau_i(q))} \). We can then rescale \( (dz_i, dw_i) \) by:
	\[
	(dz'_i, dw'_i)=(dz_i, dw_i) P_i,
	\]
	where 
	\[
	P_i = \begin{pmatrix} \lambda_i & 0 \\ 0 & \lambda_i^{-1} \end{pmatrix}.
	\]
	Therefore \( \tau_i' = \frac{dw_i'}{dz_i'} \) satisfies \( \text{Im}(\tau_i'(q)) = 1 \). However, the monodromy in the special pair \( (dz_i', dw_i') \) may not be integral.
	
	By the estimate of harmonic functions, after possibly taking a subsequence, we can assume that \( (dz_i', dw_i') \) converge smoothly to \( (dz_\infty', dw_\infty') \) in the universal cover of \( Q \) (see \cite[Section 3.2.2]{sunzhang} for details). As a result, \( (dz_\infty', dw_\infty') \) defines an SKS on \( Q \) with the metric \( g_\infty \).
	
	Let \( \gamma \) be a loop on \( Q \) based at $q$. In the following, we will write $A_{\gamma,i},A'_{\gamma,i},A'_{\gamma,\infty}$ for the monodromy along \( \gamma \) in the special pairs \( (dz_i, dw_i), (dz'_i, dw'_i), (dz'_\infty, dw'_\infty) \), respectively. We have $A'_{\gamma, i} = P_i^{-1} A_{\gamma, i} P_i$ and \( A'_{\gamma, i} \to A'_{\gamma, \infty} \).

	\begin{proof}[Proof of Theorem~\ref{thm-integrality}]
		Let \( Q = \mathbb{P}^1 \backslash \left( \bigcup_{k=1}^n B(p_k) \right) \), where \( B(p_k) \) are disjoint disks centered at \( p_k \). 
		The proof proceeds in two cases: whether \( \lambda_i^2 = \text{Im}(\tau_i) \) is bounded or not.
		
		The bounded case is easier and already proved in \cite[Remark~3.18]{sunzhang}. In this case, we can assume that \( (dz_i, dw_i) \) converge smoothly to \( (dz_\infty, dw_\infty) \) by taking a subsequence, thus we have the smooth convergence of monodromy and \( (dz_\infty, dw_\infty) \) defines an integral SKS on \( (Q, g_\infty) \).

		In the unbounded case, we may assume \( \lambda_i \to \infty \) by taking a subsequence.
		For any loop \( \gamma \) based at \( q \), we write
		\[
		A_{\gamma, i} = \begin{pmatrix} a_i & b_i \\ c_i & d_i \end{pmatrix} \in \mathrm{SL}_2(\mathbb{Z}), \quad A'_{\gamma, \infty} = \begin{pmatrix} a & b \\ c & d \end{pmatrix} \in \mathrm{SL}_2( \mathbb{R}).
		\]
		As in \cite[Remark~3.18]{sunzhang}, we have
		\[
		A'_{\gamma, i} = P_i^{-1} A_{\gamma, i} P_i = \begin{pmatrix} a_i & \lambda_i^{-2} b_i \\ \lambda_i^2 c_i & d_i \end{pmatrix} \to \begin{pmatrix} a & b \\ c & d \end{pmatrix} = A'_{\gamma, \infty}.
		\]
		Thus, for sufficiently large \( i \), we must have \( c_i = 0 \), and \( a_i = d_i = \pm 1 \). Therefore,
		
		\[
		A'_{\gamma, \infty} = \pm \begin{pmatrix} 1 & b \\ 0 & 1 \end{pmatrix}.
		\]
		
		From the topology of $Q$, we can choose loops \( \gamma_k \) based at \( q \) for each \( p_k \), which go counterclockwise around \( p_k \), and satisfy the following:
		\[
		\gamma_1 \gamma_2 \dots \gamma_n = \text{Id} \in \pi_1(Q, q).
		\]
		We can then write
		\[
		A'_{\gamma_k, \infty} = \pm \begin{pmatrix} 1 & b_k \\ 0 & 1 \end{pmatrix}.
		\]
		Thus,
		\[
		\text{Id} = A'_{\gamma_1 \gamma_2 \dots \gamma_n, \infty} = A'_{\gamma_1, \infty} A'_{\gamma_2, \infty} \dots A'_{\gamma_n, \infty} = \pm \begin{pmatrix} 1 & \sum b_k \\ 0 & 1 \end{pmatrix}.
		\]
		
		As noted in Remark~\ref{positivity-singularity}, the monodromy around these singularities \( p_k \) is not \( \mathrm{SL}_2(\mathbb{R}) \)-conjugate to \( \pm I_{-1} \), hence \( b_k \geq 0 \). Therefore, \( b_k = 0 \) for all \( k \), and thus \( (dz_\infty', dw_\infty') \) gives an integral SKS on $(Q,g_\infty)$. Hence we have an integral singular SKS on $(X_\infty,g_\infty)$.
	\end{proof}
	
	The following example shows \( \text{Im}(\tau_i) \) can be unbounded.
	
	\begin{example}
		\label{example-kummer}
		Given \( \tau \) with \( \mathrm{Im}(\tau) > 0 \), let \( G \) be the group of \( \text{Aut}(\mathbb{C}) \) generated by \( T_1\colon z \mapsto z + 1 \), \( T_2\colon z \mapsto z + \tau \), and \( R\colon z \mapsto -z \). Then the quotient \( \mathbb{C} / G \) is a flat orbifold \( T^2 / \mathbb{Z}_2 \) with four singularities, and the metric is given by \( g=|dz|^2 \). 
		
		Note that \( (dz, \mathrm{i}dz) \) gives a special pair near \( \left( \frac{1}{4}, 0 \right) \), and extends to a special pair in the universal cover of the regular part of \( T^2 / \mathbb{Z}_2 \). The monodromy along any singularity is \( -\mathrm{Id} \) in any special pair \( (dz', dw') \). Thus, the integral structure is highly unique. We can take \( (dz_i, dw_i) = (\lambda_i^{-1} dz, \lambda_i  \mathrm{i}d z) \), with \(  \mathrm{Im}(\tau_i) = \lambda_i^2 \to \infty \).
		
	\end{example}
	
	In section \ref{section-isotrivial}, We will see that this example corresponds to collapsing of Kummer surfaces.

	\subsection{Relation with Elliptic Fibrations: Global} \label{relation2}
	
	In this section, we extend Proposition~\ref{bijection-regular-region} to a global form include singularities.  The following proposition gives the integral models of singular SKS.
	
	\begin{prop}
		\label{correspondence-singularity}
		
		Given a singular SKS model on a small disk \( \Delta \) as in Table~\ref{table-singular-model}, with  $(dz,dw)$ as an integral special pair. We can apply a $\mathrm{SL}_2(\mathbb{Z})$ transformation to $(dz,dw)$, such that in a suitable holomorphic coordinate $\zeta$, the special pair corresponds to a model in Table~\ref{table-two-singular-model}. In particular, the singular type of an integral SKS matches Kodaira’s table of singular fibers \cite{kod23}.
		
		\begin{table}[h]
			\centering
			\caption{Integral Singular Models}
			\label{table-two-singular-model}
			\renewcommand\arraystretch{1.2}
			\begin{tabular}{|l|l|l|c|}
				\hline
				\textbf{Type} & \textbf{\qquad Special Pairs}& \textbf{\(\beta\)} & \textbf{Monodromy} \\
				\hline
				\textbf{$I_n \ (n > 0)$} & 
				\( \begin{array}{l}
					dz = d\zeta \\
					dw = \frac{n}{2\pi \mathrm{i}} (\log \zeta + f) d\zeta
				\end{array} \)& $\backslash$ & \( I_n \) \\
				\hline
				\textbf{$I_n^* \ (n > 0)$} & 
				\( \begin{array}{l}
					dz = \zeta^{-\frac{1}{2}} d\zeta \\
					dw = \frac{n}{2\pi \mathrm{i}} \zeta^{-\frac{1}{2}} (\log \zeta + f) d\zeta
				\end{array} \)& $\backslash$ & \( -I_n \) \\
				\hline
				\textbf{$I_0^*$} & 
				\( \begin{array}{l}
					dz = g \zeta^{-\frac{1}{2}} d\zeta \\
					dw = h \zeta^{-\frac{1}{2}} d\zeta
				\end{array} \)
				& \( \frac{1}{2} \) & \( -\mathrm{Id} \)\\
				\hline
				\textbf{$II$} & 
				\( \begin{array}{l}
					dz = (1 - f) \zeta^{-\frac{1}{6}} d\zeta \\
					dw = e_3 (1 - e_3 f) \zeta^{-\frac{1}{6}} d\zeta
				\end{array} \)
				& \( \frac{5}{6} \) & 
				\( \begin{pmatrix} 0 & 1 \\ -1 & 1 \end{pmatrix} \) \\
				\hline
				\textbf{$II^*$} & 
				\( \begin{array}{l}
					dz = (1 - f) \zeta^{-\frac{5}{6}} d\zeta \\
					dw = e_3 (1 - e_3 f) \zeta^{-\frac{5}{6}} d\zeta
				\end{array} \)
				& \( \frac{1}{6} \) & 
				\( \begin{pmatrix} 1 & -1 \\ 1 & 0 \end{pmatrix} \) \\
				\hline
				\textbf{$III$} & 
				\( \begin{array}{l}
					dz = (1 - f) \zeta^{-\frac{1}{4}} d\zeta \\
					dw = \mathrm{i}(1 + f) \zeta^{-\frac{1}{4}} d\zeta
				\end{array} \)
				& \( \frac{3}{4} \) & 
				\( \begin{pmatrix} 0 & 1 \\ -1 & 0 \end{pmatrix} \) \\
				\hline
				\textbf{$III^*$} & 
				\( \begin{array}{l}
					dz = (1 - f) \zeta^{-\frac{3}{4}} d\zeta \\
					dw = \mathrm{i}(1 + f) \zeta^{-\frac{3}{4}} d\zeta
				\end{array} \)
				& \( \frac{1}{4} \) & 
				\( \begin{pmatrix} 0 & -1 \\ 1 & 0 \end{pmatrix} \) \\
				\hline
				\textbf{$IV$} & 
				\( \begin{array}{l}
					dz = (1 - f) \zeta^{-\frac{1}{3}} d\zeta \\
					dw = e_3 (1 - e_3 f) \zeta^{-\frac{1}{3}} d\zeta
				\end{array} \)
				& \( \frac{2}{3} \) & 
				\( \begin{pmatrix} -1 & 1 \\ -1 & 0 \end{pmatrix} \) \\
				\hline
				\textbf{$IV^*$} & 
				\( \begin{array}{l}
					dz = (1 - f) \zeta^{-\frac{2}{3}} d\zeta \\
					dw = e_3 (1 - e_3 f) \zeta^{-\frac{2}{3}} d\zeta
				\end{array} \)
				& \( \frac{1}{3} \) & 
				\( \begin{pmatrix} 0 & -1 \\ 1 & -1 \end{pmatrix} \) \\
				\hline
			\end{tabular}
		\end{table}
		
		\begin{enumerate}
			\item In the cases $I_n$ and $I^*_n$ ($n>0$), \( f \) is a holomorphic function.
			\item In the \( I_0^* \) case, \( g \) and \( h \) are holomorphic functions satisfying \( \mathrm{Im}\left(\frac{g}{h}\right) > 0 \).
			\item In other cases, \( f \) satisfies the condition in Definition~\ref{definition-singularity} according to \( \beta \), and \( e_3\coloneq e^{\frac{2\pi\mathrm{i}}{3}} \).
		\end{enumerate}
	\end{prop}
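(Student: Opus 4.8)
The plan is to reduce the whole statement to two ingredients: the classification of the local monodromy inside $\mathrm{SL}_2(\mathbb{Z})$, and the reconstruction of the special pair from its monodromy together with the growth behaviour recorded in Table~\ref{table-singular-model}. First I would use the fact that different special pairs of one fixed SKS differ by a single element $P\in\mathrm{SL}_2(\mathbb{R})$. Thus any integral special pair $(dz,dw)$ for the given model is $P\cdot(\text{canonical pair of Table~\ref{table-singular-model}})$, and its monodromy around $p_k$ is $M=P^{-1}M_0P$, where $M_0\in\{I_{2\pi},\,-I_{2\pi},\,R_{2\pi\beta}\}$ is the corresponding entry of Table~\ref{table-singular-model} and $M\in\mathrm{SL}_2(\mathbb{Z})$ by hypothesis. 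Since $\mathrm{SL}_2(\mathbb{R})$-conjugacy preserves the trace, and for elliptic elements also the sign of the rotation angle, $M$ must be respectively a parabolic of trace $+2$, the negative of such a parabolic, or a finite-order element with eigenvalues $e^{\pm 2\pi i\beta}$ and orientation fixed by that of $R_{2\pi\beta}$.

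Next I would carry out the conjugacy classification inside $\mathrm{SL}_2(\mathbb{Z})$, using the description of conjugacy classes in \cite[Lemmas~3.17--3.18]{sunzhang}. In the parabolic case $M_0=I_{2\pi}$, every nonidentity integral parabolic of trace $2$ is $\mathrm{SL}_2(\mathbb{Z})$-conjugate to $I_n$ for a unique integer $n$, and Remark~\ref{positivity-singularity} forces $n>0$; this is type $I_n$, while $M_0=-I_{2\pi}$ gives $-I_n$, i.e.\ type $I_n^*$ with $n>0$. In the elliptic case the only finite orders available are $2,3,4,6$; the order is read off from $\mathrm{tr}\,R_{2\pi\beta}=2\cos2\pi\beta\in\{-2,-1,0,1\}$, and for each of the orders $3,4,6$ the two $\mathrm{SL}_2(\mathbb{Z})$-conjugacy classes are distinguished precisely by the sign of the rotation, pairing $\beta$ against $1-\beta$. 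This matches $\beta=\tfrac12$ with $-\mathrm{Id}$ (type $I_0^*$), $\{\tfrac23,\tfrac13\}$ with the two order-$3$ classes (types $IV,IV^*$), $\{\tfrac34,\tfrac14\}$ with order $4$ (types $III,III^*$), and $\{\tfrac56,\tfrac16\}$ with order $6$ (types $II,II^*$); I would declare the explicit matrices of Table~\ref{table-two-singular-model} to be the normal forms.

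With $M$ normalized to its Table~\ref{table-two-singular-model} representative, I would reconstruct the special pair. In the parabolic cases the $(\pm1)$-eigencovector of $M$ is a multiple of $dz$, which the prescribed growth normalizes to $d\zeta$ (resp.\ $\zeta^{-\frac12}d\zeta$) after an analytic change of $\zeta$, while $dw-\tfrac{n}{2\pi i}(\log\zeta)\,dz$ is single-valued and holomorphic, producing exactly the logarithmic forms listed. In the elliptic cases I would diagonalize $M$ over $\mathbb{C}$: the eigen-combinations of $(dz,dw)$ transform by $e^{\pm2\pi i\beta}$, hence equal $\zeta^{\beta-1}$ times a holomorphic function with the fractional ramification allowed by Definition~\ref{definition-singularity}, and inverting the diagonalization recovers $dz,dw$ in the stated shape; a further change of $\zeta$ normalizes the leading coefficient, and the constraint $\mathrm{Im}(\tau)>0$ fixes the orientation consistently. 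The final assertion is then immediate, because the matrices in Table~\ref{table-two-singular-model} are exactly the monodromies of the Kodaira fibers $I_n,I_n^*,II,II^*,III,III^*,IV,IV^*$.

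I expect the elliptic bookkeeping to be the main obstacle: one must verify that $\mathrm{SL}_2(\mathbb{R})$-conjugacy genuinely records the orientation of the rotation, so that $\beta$ and $1-\beta$ land in distinct $\mathrm{SL}_2(\mathbb{Z})$-classes rather than being merged, and then carry the diagonalization-and-normalization through so as to land on the exact special pairs of Table~\ref{table-two-singular-model}. The central case $-\mathrm{Id}$ (type $I_0^*$) needs separate care, since there $M$ commutes with everything, no normalization of the leading coefficients is possible, and the general form with $\mathrm{Im}(g/h)>0$ must be retained.
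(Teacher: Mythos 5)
Your proposal is correct and takes essentially the same route as the paper: both write the integral pair as \((dz_0,dw_0)P\) with \(P\in \mathrm{SL}_2(\mathbb{R})\), normalize the monodromy to its \(\mathrm{SL}_2(\mathbb{Z})\) normal form via the conjugacy classification (with Remark~\ref{positivity-singularity} excluding the negative parabolic classes), and then reconstruct the pair --- single-valuedness of \(dz\) and subtraction of the logarithmic part in the \(I_n,I_n^*\) cases, and in the elliptic cases the identification of the residual freedom with the circle of twists, absorbed by replacing \(f\) with \(e^{-2\mathrm{i}\theta}f\) and rescaling \(\zeta\). Your diagonalization over \(\mathbb{C}\) is simply an unwound form of the paper's Lemma~\ref{commute-elliptic} (the commutant of \(R_{2\pi\beta}\) is \(\{R_\theta\}\), i.e.\ the diagonal residual matrix must be \(\mathrm{diag}(e^{\mathrm{i}\theta},e^{-\mathrm{i}\theta})\) by reality and determinant one of \(P\)), and both arguments rightly set aside the central case \(-\mathrm{Id}\) (type \(I_0^*\)), where no such normalization is available.
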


	We will need the following lemma, the proof is simple.
	\begin{lem}
		\label{commute-elliptic}
		If \( A \in \mathrm{SL}_2(\mathbb{R}) \) commutes with \( R_\alpha \) for \(\alpha \notin \pi \mathbb{Z} \), then \( A = R_\theta \) for some \( \theta \in \mathbb{R} \).
	\end{lem}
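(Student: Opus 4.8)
The plan is to argue directly from the commutation relation by comparing matrix entries. Write $A = \begin{pmatrix} a & b \\ c & d \end{pmatrix} \in \mathrm{SL}_2(\mathbb{R})$ with $ad - bc = 1$, and expand both products
\[
A R_\alpha = \begin{pmatrix} a\cos\alpha + b\sin\alpha & -a\sin\alpha + b\cos\alpha \\ c\cos\alpha + d\sin\alpha & -c\sin\alpha + d\cos\alpha \end{pmatrix}, \qquad
R_\alpha A = \begin{pmatrix} a\cos\alpha - c\sin\alpha & b\cos\alpha - d\sin\alpha \\ a\sin\alpha + c\cos\alpha & b\sin\alpha + d\cos\alpha \end{pmatrix}.
\]
The whole point where the hypothesis enters is that $\alpha \notin \pi\mathbb{Z}$ is exactly the condition $\sin\alpha \neq 0$; this is the one step that must not be skipped, since for $\alpha \in \pi\mathbb{Z}$ one has $R_\alpha = \pm I$, which is central and commutes with all of $\mathrm{SL}_2(\mathbb{R})$, so the conclusion genuinely fails there.

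Setting $A R_\alpha = R_\alpha A$ and comparing the $(1,1)$ entries gives $(b + c)\sin\alpha = 0$, hence $b = -c$; comparing the $(1,2)$ entries gives $(a - d)\sin\alpha = 0$, hence $a = d$. Thus $A = \begin{pmatrix} a & b \\ -b & a \end{pmatrix}$, and the determinant constraint $\det A = 1$ becomes $a^2 + b^2 = 1$. We may therefore write $a = \cos\theta$ and $b = -\sin\theta$ for some $\theta \in \mathbb{R}$, which is precisely the statement $A = R_\theta$. The remaining two entry comparisons are then automatically satisfied, so no further check is needed.

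There is no real obstacle here, which matches the paper's remark that the proof is simple; the computation above is self-contained. If a more conceptual phrasing is preferred, one can note that $R_\alpha$ has distinct complex eigenvalues $e^{\pm \mathrm{i}\alpha}$ exactly when $\sin\alpha \neq 0$, so its centralizer in the algebra of $2\times 2$ matrices is the two-dimensional commutative algebra $\mathbb{R}[R_\alpha] = \{\, xI + yR_\alpha : x,y \in \mathbb{R}\,\} \cong \mathbb{C}$; every element of this algebra has the rotation--scaling form $\begin{pmatrix} p & -q \\ q & p \end{pmatrix}$, and imposing determinant $p^2 + q^2 = 1$ recovers exactly the rotation subgroup $\mathrm{SO}(2) = \{R_\theta\}$. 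Either route yields the claim.
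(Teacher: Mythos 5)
Your proof is correct: the entry-by-entry comparison does yield $b=-c$ and $a=d$ from $\sin\alpha\neq 0$, and the determinant condition $a^2+b^2=1$ then forces $A=R_\theta$, with the remaining two entries automatically consistent. The paper itself gives no proof (it only remarks that ``the proof is simple''), and your computation --- or equivalently your centralizer argument via $\mathbb{R}[R_\alpha]\cong\mathbb{C}$ --- is exactly the standard argument the authors had in mind, so there is nothing to reconcile.
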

	
	\begin{proof}[Proof of Proposition~\ref{correspondence-singularity}]
		
		Consider a singular SKS model given by a special pair \( (dz_0, dw_0) \) as in Table~\ref{table-singular-model}. Let \( (dz, dw) \) be an integral special pair. We have  
		\[
		(dz, dw) = (dz_0, dw_0) P
		\]
		for some \( P \in \mathrm{SL}_2(\mathbb{R}) \).
		
		We will prove proposition \ref{correspondence-singularity} for singularities of type (1) and type (3) with \( \beta =  \frac{1}{2},\frac{5}{6}\) in Table~\ref{table-singular-model}, the other cases follow similarly.
		
		\
		
		\textbf{Type (1):}  We have  $(dz_0,dw_0)=(d\zeta ,-\left( \mathrm{i} \log \zeta + f \right) d\zeta)$. By applying a transformation in \( \mathrm{SL}_2(\mathbb{Z}) \), we can assume the monodromy in \( (dz, dw) \) is \( I_n \). This implies that \( dz \) is single-valued, and we must have \( dz = rd\zeta \) for some \( r \in \mathbb{R} \). After rescaling to \( \zeta \), we can assume \( dz = d\zeta \), and \( dw= \frac{n}{2\pi \mathrm{i}} (\log \zeta + f') \) for a holomorphic function $f'$.
		
		\
		
		\textbf{Type (3) with \( \beta = \frac{1}{2} \):}  We have $(dz_0,dw_0)=((1 - f) \zeta^{-\frac{1}{2}} d\zeta ,\mathrm{i} (1 + f) \zeta^{-\frac{1}{2}} d\zeta)$, thus $(dz, dw) = (dz_0, dw_0) P$ is of the form $(g\zeta^{-\frac{1}{2}} d\zeta ,h\zeta^{-\frac{1}{2}} d\zeta)$.
		
		\
		
		\textbf{Type (3) with \( \beta = \frac{5}{6} \):}  We have  $(dz_0,dw_0)=((1 - f) \zeta^{-\frac{1}{6}} d\zeta ,\mathrm{i} (1 + f) \zeta^{-\frac{1}{6}} d\zeta)$. By applying a transformation in \( \mathrm{SL}_2(\mathbb{Z}) \), we can assume the monodromy in \( (dz, dw) \) is
		\[
		\begin{pmatrix}
			0 & 1 \\
			-1 & 1
		\end{pmatrix} = P^{-1} R_{2\pi\beta} P.
		\]
		Let \( (dz',d w') = (dz_0, dw_0) X \), where \( X = r_0 \begin{pmatrix}
			1 & -\frac{1}{2} \\
			0 & \frac{\sqrt{3}}{2}
		\end{pmatrix} \) with \( r_0 = \sqrt{\frac{2}{\sqrt{3}}} \). Then the monodromy in \( (dz', dw') \) is also
		
		\[
		\begin{pmatrix}
			0 & 1 \\
			-1 & 1
		\end{pmatrix} = X^{-1} R_{2\pi\beta} X.
		\]
		Thus, \( P X^{-1} \) commutes with \( R_{2\pi\beta} \).
		
		By Lemma~\ref{commute-elliptic}, \( P = R_\theta X \) for some \( \theta \in \mathbb{R} \), and thus
		\[
		(dz, dw) = (dz_0, dw_0) P = r_0 e^{\mathrm{i} \theta} \left( (1 - e^{-2\mathrm{i} \theta} f) \zeta^{-\frac{1}{6}} d\zeta, \quad e_3 (1 - e_3 e^{-2\mathrm{i} \theta} f) \zeta^{-\frac{1}{6}} d\zeta \right).
		\]
		After rescaling \( \zeta \) in $\mathbb{C}$, we obtain \( (dz, dw) \) as given in Table~\ref{table-two-singular-model}.
	\end{proof}
	
	Given an integral singular SKS on \( \Delta \) with singularity at 0, by Proposition~\ref{correspondence-singularity}, we can assume the integral structure is given by Table~\ref{table-two-singular-model}. Let \( f\colon X_{\Delta^*} \to \Delta^* \) be the torus fibration given by Proposition~\ref{bijection-regular-region}. By Kodaira's classical results on singular elliptic fibers \cite[Section 8]{kod23}, we can extend the fibration \( f\colon X_{\Delta^*} \to \Delta^* \) to \( f\colon X_{\Delta} \to \Delta \) in a canonical way, by adding a singular fiber over 0 according to the singular type in Table~\ref{table-two-singular-model}, and the zero-section over $\Delta^* $ naturally extends to $\Delta $.
	
	The canonical holomorphic 2-form \( \Omega \) on \( X_{\Delta^*} \) extends to a meromorphic form on \( X_{\Delta} \). By \cite[Table~1]{hein}, \( \Omega \) has no divisor along the singular fiber (note that our choice of generators \( \tau_1, \tau_2 \) is different from \cite[Table~1]{hein}, see also \cite[Section 5.1]{Chen2019CollapsingRM} for similar calculations). Thus, there is a canonically defined holomorphic volume form on \( X_{\Delta} \).

	\begin{proof}[Proof of Theorem~\ref{correspondence}]
		Let \( (B, \omega, J, \nabla) \) be an integral singular SKS with \( B = \mathbb{P}^1 \), and let \( S = \{ p_1, \dots, p_k \} \) be the singular locus, with \( B_0 = B \setminus S \) being the regular part. Let \( X_0 = T^* B_0 / \Lambda \) be the holomorphic torus fibration given by Proposition~\ref{bijection-regular-region}. From the discussion above, we can compactify \( X_0 \) to a complex surface \( X \) with trivial canonical bundle. By \cite[Proposition~2.1 in Chapter II]{friedman4mfd}, \( X \) is simply connected, so \( X \to B \) gives a Jacobian elliptic K3 surface, with the prescribed integral SKS.

		Conversely, given a Jacobian elliptic K3 surface \( f\colon X \to B \), then \( f \) has no multiple fibers since there is a global section. By Proposition~\ref{bijection-regular-region}, there is an integral SKS on the regular part \( B_0 \). By Kodaira's classification of singular fibers \cite{kod23} and calculation on the holomorphic form $\Omega$ \cite[Table~1]{hein}, the local models of the SKS given by the Jacobian elliptic K3 surface near the singular points are exactly those listed in Table~\ref{table-two-singular-model}. Thus, we can construct an integral singular SKS on the base.
	\end{proof}
	
	\begin{rmk}
		
		Here, we provide an alternative proof that a Jacobian elliptic K3 surface has at most 24 singular fibers, which does not rely on the Euler characteristic of K3 surfaces. By Theorem~\ref{correspondence}, a Jacobian elliptic K3 surface corresponds to an integral singular SKS on \( \mathbb{P}^1 \), and its singular fibers correspond to the singular points on \( \mathbb{P}^1 \). Let \( S \) be the set of singular points on \( \mathbb{P}^1 \). By \cite[Theorem~2 in Section 6.5]{kontsevich}, we have  
		\[
		\sum_{x\in S} i(x) = \chi(\mathbb{P}^1) = 2,
		\]
		where \( i(x) \in \frac{\mathbb{Z}}{12} \) depends on the local monodromy near \( x \). In our case, it can be verified that \( i(x) > 0 \) for all \( x \in S \). Thus, there are at most 24 singular points on \( \mathbb{P}^1 \).

	\end{rmk}

	Given a Jacobian elliptic K3 surface \( f\colon X \to \mathbb{P}^1 \), let \( (\mathbb{P}^1, g) \) denote the underlying metric space of the corresponding integral singular SKS given by Theorem~\ref{correspondence} .  By the construction of Chen et al. \cite[Theorem~1.1]{Chen2019CollapsingRM}, \( (\mathbb{P}^1, g) \) is the Gromov--Hausdorff limit of a sequence of hyperkähler metrics on K3 surfaces. Thus, by Theorem~\ref{correspondence}, we can define a map as in the Introduction:
	\[
	\mathcal{F}\colon \mathcal{M}_{\text{Jac}} \to \mathfrak{M}_2^c.
	\]
	Moreover, by Theorem~\ref{thm-integrality}, \( \mathcal{F} \) is surjective.
	
	\begin{rmk}
		Although we restrict to Jacobian elliptic K3 surfaces here, we remark that the construction of Chen et al. applies to all elliptic K3 fibrations.
	\end{rmk}

	\section{Properties of \( \mathcal{F} \)} \label{injectivity}
	
	We are interested in the following question: how many Jacobian elliptic K3 surfaces are associated with a given metric space \( (\mathbb{P}^1,g) \) by \( \mathcal{F} \)?
	
	Given a Jacobian elliptic K3 surface \( f\colon X \to \mathbb{P}^1 \), Theorem~\ref{correspondence} implies that this corresponds to an integral singular SKS on \( \mathbb{P}^1 \) after choosing a holomorphic volume form \( \Omega \) on \( X \). The image of \( (X, f) \) under \( \mathcal{F} \) is simply the underlying metric space of the integral singular SKS on \( \mathbb{P}^1 \) (rescaled to unit diameter).  Let \( (dz, dw) \) be an integral special pair associated with it. 
	
	If another Jacobian elliptic K3 surface \( (X', f') \) induces the same metric on \( \mathbb{P}^1 \), then by Proposition~\ref{sks-with-same-metric}, we can choose \( \Omega' \) on \( X' \) such that \( (X', f', \Omega') \) defines the same singular SKS on \( \mathbb{P}^1 \), possibly with a different integral structure given by \( (dz', dw') \).  
	
	If \( f\colon X \to \mathbb{P}^1 \) is isotrivial, then \( \frac{dw}{dz} \) is constant, and the metric on the regular part of \( \mathbb{P}^1 \) is flat. Conversely, if the metric on the regular part of \( \mathbb{P}^1 \) is flat, then by Proposition~\ref{sks-with-same-metric}, we have \( \frac{dw}{dz} \) constant, implying that \( f\colon X \to \mathbb{P}^1 \) is isotrivial. Thus, we divide the discussion into two cases.

	\subsection{Isotrivial Case}\label{section-isotrivial}
	
	In this case, the only possible singularity of SKS is type (III) with \( f = 0 \) in Definition~\ref{definition-singularity}. Near each singularity, the metric is locally a flat cone metric:
	\[
	\frac{\mathrm{i}}{2} |\zeta|^{2\beta - 2} d\zeta \wedge d\bar{\zeta},
	\]
	and we can take a local special pair 
	\[
	(dz_0, \mathrm{i} dz_0) = (\zeta^{\beta - 1} d\zeta, \mathrm{i} \zeta^{\beta - 1} d\zeta)
	\]
	with local monodromy \( R_{2\pi\beta} \). We divide the discussion into two cases.
	
	\
	
	\textbf{Case 1:} \( \beta \neq \frac{1}{2} \) for some $p_k$. Assume \( (dz', dw') \) and \( (dz, dw) \) are two integral special pairs. After some transformation in \( \mathrm{SL}_2(\mathbb{Z}) \), we can assume they give the same local monodromy at \( p_k \). Write \( (dz, dw) = (dz_0, \mathrm{i} dz_0) Q \) and \( (dz', dw') = (dz_0, \mathrm{i} dz_0) P Q \) for \( P, Q \in \mathrm{SL}_2(\mathbb{R}) \). Then we have
	\[
	Q^{-1} R_{2\pi\beta} Q = (PQ)^{-1} R_{2\pi\beta} (PQ) = Q^{-1} P^{-1} R_{2\pi\beta} P Q.
	\]
	By Lemma~\ref{commute-elliptic}, we have \( P = R_\theta \). Thus,
	\[
	(dz', dw') = (dz_0, \mathrm{i} dz_0) R_\theta Q = (e^{\mathrm{i} \theta} dz_0, e^{\mathrm{i} \theta} \mathrm{i} dz_0) Q = e^{\mathrm{i} \theta}(dz, dw),
	\]
	which is given by a twist by \( \theta \) of \( (dz, dw) \). 
	
	By Theorem~\ref{correspondence}, the two integral SKS give the same Jacobian elliptic K3 surfaces. Hence, \( \mathcal{F} \) is injective in this case.
	
	\
	
	\textbf{Case 2:}  \( \beta = \frac{1}{2} \) for all $p_k$. Then the local monodromy near every singularity is \( -\text{Id} \), and the monodromy is integral for any special pair. Furthermore, in this case, \( (\mathbb{P}^1, g) \) is a flat orbifold with local model \( \mathbb{R}^2 / \mathbb{Z}_2 \), thus it must be \( T^2 / \mathbb{Z}_2 \) with flat orbifold metric (see \cite[Chapter 13]{orbifold} for basic property of orbifolds), and is given by Example~\ref{example-kummer}. 
	The corresponding Jacobian elliptic K3 surface is given by $f\colon \widetilde{T^4/\mathbb{Z}_2}\to T^2 / \mathbb{Z}_2$, where $\widetilde{T^4/\mathbb{Z}_2}$ is the minimal resolution of $T^4/\mathbb{Z}_2$, thus a Kummer surface.

	In this case, the Jacobian elliptic K3 surface is determined by the \( j \)-invariant, $j(\frac{dw}{dz})$, and thus the preimage is parameterized by \( \mathbb{C} \).
	
	Combine Case 1 and Case 2 above, we proved the isotrivial part of Theorem~\ref{injective-theorem}.
	
	\subsection{Non-isotrivial Case}
	
	In this case, a twist by $\theta\notin \pi\mathbb{Z}$ must change the SKS by Proposition~\ref{twist-give-same-sks}. Thus, by Theorem~\ref{correspondence}, two integral special pairs \( (dz_1, dw_1) \), \( (dz_2, dw_2) \) of a given singular SKS on $\mathbb{P}^1$ give the same Jacobian elliptic K3 surface if and only if \( (dz_1, dw_1) = (dz_2, dw_2) P \) for some \( P \in \mathrm{SL}_2(\mathbb{Z}) \).
	
	\begin{defi}
		Given a monodromy representation \( \rho\colon \pi_1(M, p) \to \mathrm{SL}_2(\mathbb{Z}) \), we define the monodromy group \( G_\rho \) to be the image of \( \rho \) in \( \mathrm{SL}_2(\mathbb{Z}) \), and the modular monodromy group \( \bar{G}_\rho \) to be the image of \( G_\rho \) in \( \mathrm{PSL}_2(\mathbb{Z}) \).
	\end{defi}
	
	\begin{defi}
		Given a subgroup \( G \leq \mathrm{SL}_2(\mathbb{Z}) \), define
		\[
		N(G) = \# \left\{ P \in \mathrm{SL}_2(\mathbb{R}) / \mathrm{SL}_2(\mathbb{Z}) : P^{-1} G P \subset \mathrm{SL}_2(\mathbb{Z}) \right\}.
		\]
		By slight abuse of language, this also defines \( N(G) \) for \( G \leq \mathrm{PSL}_2(\mathbb{Z}) \).
	\end{defi}
	
	Take an integral singular SKS on \( \mathbb{P}^1 \) with monodromy group \( G_\rho \), assuming the metric is not flat on the regular part. By the discussion above, the number of its preimages under \( \mathcal{F} \) is given by \( N(G_\rho) \).
	
	In the non-isotrivial case, all possible monodromy groups \( G_\rho \) of elliptic K3 surfaces have finite index, with 3411 possible cases of \( G_\rho \) and 3228 possible cases of \( \bar{G}_\rho \) \cite{monodromygroup}. By \cite[Theorem~3.8 in Chapter II]{friedman4mfd}, the monodromy group is \( \mathrm{SL}_2(\mathbb{Z}) \) if the Jacobian elliptic K3 surface has 24 \( I_1 \)-fibers.

	As we will see in the next section, we have \( N(\mathrm{SL}_2(\mathbb{Z})) = 1 \), and \( N(G) \) is finite whenever \( G \) has finite index in \( \mathrm{SL}_2(\mathbb{Z}) \). Thus, we have proved the non-isotrivial case of Theorem~\ref{injective-theorem}.
	\begin{example}
		
		Consider the boundary of a symmetric 3-simplex, \( \partial \Delta \), which is homeomorphic to \( \mathbb{P}^1 \). Hultgren et al. \cite{HJMM} constructed a metric on \( \partial \Delta \) by solving the "tropical Monge-Ampère equation," and Jonsson et al. \cite{polytope} showed that this metric is given by an integral singular SKS on \( \partial \Delta \). Their method first fixes the affine structure and then uses optimal transport theory to produce the metric. This construction corresponds to a Jacobian elliptic K3 surface with \(6 I_4 \) singular fibers. In a suitable special pair, the monodromy along the six singularities is calculated to be:        
		\[
		\begin{pmatrix}
			1 & 4 \\
			0 & 1
		\end{pmatrix}, \quad
		\begin{pmatrix}
			5 & 4 \\
			-4 & -3
		\end{pmatrix}, \quad
		\begin{pmatrix}
			1 & 0 \\
			-4 & 1
		\end{pmatrix}, \quad
		\begin{pmatrix}
			-7 & 4 \\
			-16 & 9
		\end{pmatrix}, \quad
		\begin{pmatrix}
			-3 & 4 \\
			-4 & 5
		\end{pmatrix}, \quad
		\begin{pmatrix}
			-7 & 16 \\
			-4 & 9
		\end{pmatrix},
		\]
		and the monodromy group is calculated to be \( \Gamma(4) \) (Definition~\ref{definition-congruence-group}).
		
		After a transformation by \( \begin{pmatrix}\sqrt{2}  & 0 \\ 0 &  \frac{1}{\sqrt{2}}\end{pmatrix} \), the singular structure corresponds to \( 4I_2 + 2 I_8 \) singular fibers, the monodromy group becomes \( \Gamma_0(8) \cap \Gamma(2) \).
		
		After a transformation by \( \begin{pmatrix} 2 & 0 \\ 0 & \frac{1}{2} \end{pmatrix} \), the singular structure corresponds to \( 4I_1 + I_4 + I_{16} \) singular fibers, the monodromy group becomes \( \Gamma_0(16) \).
		
	\end{example}
	
	\subsection{Calculation of \( N(G) \)}\label{calculation-of-ng}
	
	Denote \( M_2 \) as the \( \mathbb{Z} \)-module of all integral 2-matrices. Let \( G \leq \mathrm{SL}_2(\mathbb{Z}) \) be a finite index subgroup, and let \( M(G) \) be the submodule of \( M_2 \) generated by \( G \). It is easy to see that \( P^{-1} G P \in \mathrm{SL}_2(\mathbb{Z}) \) if and only if \( P^{-1} M(G) P \in \mathrm{SL}_2(\mathbb{Z}) \).
	
	Write \( P = \begin{pmatrix} a_1 & a_2 \\ a_3 & a_4 \end{pmatrix} \in \mathrm{SL}_2(\mathbb{R}) \). Notice that
	\[
	\begin{aligned}
		& P^{-1} 
		\begin{pmatrix}
			x & y \\
			z & w
		\end{pmatrix}
		P = \begin{pmatrix}
			a_4 & -a_2 \\
			-a_3 & a_1
		\end{pmatrix}
		\begin{pmatrix}
			x & y \\
			z & w
		\end{pmatrix}
		\begin{pmatrix}
			a_1 & a_2 \\
			a_3 & a_4
		\end{pmatrix} \\
		=& x \begin{pmatrix}
			a_1a_4 & a_2a_4 \\
			-a_1a_3 & -a_2a_3
		\end{pmatrix}
		+ y \begin{pmatrix}
			a_3a_4 & a_4^2 \\
			-a_3^2 & -a_3a_4
		\end{pmatrix}
		+ z \begin{pmatrix}
			-a_1a_2 & -a_2^2 \\
			a_1^2 & a_1a_2
		\end{pmatrix}
		+ w \begin{pmatrix}
			-a_2a_3 & -a_2a_4 \\
			a_1a_3 & a_1a_4
		\end{pmatrix}.
	\end{aligned}
	\]

	If \( G = \mathrm{SL}_2(\mathbb{Z}) \), then \( M(G) = M_2 \), and thus \( a_i^2, a_i a_j \) are integral. Therefore, \( a_i = b_i \sqrt{t} \) for some integers \( b_i, t \), with \( t \) square-free. By \( a_1a_4 - a_2a_3 = 1 \), we have \( t = 1 \), and hence \( a_i \) are integral. Thus, \( N(\mathrm{SL}_2(\mathbb{Z})) = 1 \).
	
	\begin{lem}\label{nM2subset}
		Given a finite index subgroup \( G \leq \mathrm{SL}_2(\mathbb{Z}) \), there exists an \( n \in \mathbb{Z}^+ \) such that \( nM_2 \subset M(G) \).
	\end{lem}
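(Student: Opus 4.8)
The plan is to reduce the statement to a rank computation. Note $M(G)$ is a subgroup of $M_2\cong\mathbb{Z}^4$, so it suffices to show that $M(G)$ has full rank $4$, i.e. is a finite-index sublattice; for then $M_2/M(G)$ is a finite abelian group, and taking $n=[M_2:M(G)]$ gives $n\,M_2\subseteq M(G)$ by Lagrange's theorem. Thus the entire content is to exhibit four $\mathbb{Q}$-linearly independent matrices inside $M(G)$, equivalently to show that the $\mathbb{Q}$-linear span of $G$ is all of $M_2(\mathbb{Q})$.

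To produce such elements I would use only the finiteness of the index $m=[\mathrm{SL}_2(\mathbb{Z}):G]$. Applying the pigeonhole principle to the cosets $g^k G$ for the two transvections $g=\left(\begin{smallmatrix}1&1\\0&1\end{smallmatrix}\right)$ and $g'=\left(\begin{smallmatrix}1&0\\1&1\end{smallmatrix}\right)$ shows there exist integers $1\le a,b\le m$ with $\left(\begin{smallmatrix}1&a\\0&1\end{smallmatrix}\right),\left(\begin{smallmatrix}1&0\\b&1\end{smallmatrix}\right)\in G$. Since $I\in M(G)$, subtracting the identity yields $aE_{12},\,bE_{21}\in M(G)$, where $E_{ij}$ are the standard matrix units. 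Multiplying the two transvections gives $\left(\begin{smallmatrix}1+ab&a\\b&1\end{smallmatrix}\right)\in G\subseteq M(G)$, and subtracting off $I$, $aE_{12}$ and $bE_{21}$ leaves $ab\,E_{11}\in M(G)$. A direct determinant computation in the basis $E_{11},E_{12},E_{21},E_{22}$ shows that $I,\,ab\,E_{11},\,aE_{12},\,bE_{21}$ span a sublattice of index $a^2b^2\ne 0$; hence $M(G)$ has finite index and one may take $n=a^2b^2$.

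The main (and essentially only) obstacle is breaking the symmetry between $E_{11}$ and $E_{22}$: the transvections and their transposes alone never leave the span of $I,E_{12},E_{21}$, so one needs a matrix of $M(G)$ with distinct diagonal entries. This is exactly what the product $\left(\begin{smallmatrix}1&a\\0&1\end{smallmatrix}\right)\left(\begin{smallmatrix}1&0\\b&1\end{smallmatrix}\right)$ supplies, through the cross term $ab\,E_{12}E_{21}=ab\,E_{11}$. Everything else is routine bookkeeping. As a conceptual alternative I could instead argue that a finite-index subgroup of $\mathrm{SL}_2(\mathbb{Z})$ is Zariski dense in $\mathrm{SL}_2$ (its Zariski closure is a finite-index algebraic subgroup of a connected group, hence all of $\mathrm{SL}_2$), so its $\mathbb{Q}$-span cannot lie in any proper linear subspace of $M_2(\mathbb{Q})$; but the explicit transvection argument is more elementary and has the advantage of yielding the effective bound $n\mid a^2b^2$.
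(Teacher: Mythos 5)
Your proof is correct and takes essentially the same route as the paper: both use the finite index hypothesis to place powers of the two standard transvections in \(G\) and then verify that the \(\mathbb{Z}\)-span of \(G\) has full rank, hence finite index, in \(M_2\). The only difference is cosmetic---where the paper adjoins powers of \(\left(\begin{smallmatrix}2&1\\-1&0\end{smallmatrix}\right)\) as the element breaking the \(E_{11}\)/\(E_{22}\) symmetry, you use the product of the two transvections, which has the mild advantage of making explicit the final deduction (and the effective bound \(n\mid a^2b^2\)) that the paper compresses into ``we can deduce that \(nM_2\subset M(G)\)''.
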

	
	\begin{proof}
		Since \( G \) is a finite index subgroup of \( \mathrm{SL}_2(\mathbb{Z}) \), there exists \( n_i \in \mathbb{Z}^+ \) such that the following matrices are in \( G \):
		\[
		\begin{pmatrix} 1 & n_1 \\ 0 & 1 \end{pmatrix} = \begin{pmatrix} 1 & 1 \\ 0 & 1 \end{pmatrix}^{n_1}, \quad
		\begin{pmatrix} 1 & 0 \\ n_2 & 1 \end{pmatrix} = \begin{pmatrix} 1 & 0 \\ 1 & 1 \end{pmatrix}^{n_2}, \quad
		\begin{pmatrix} 1+n_3 & n_3 \\ -n_3 & 1-n_3 \end{pmatrix} = \begin{pmatrix} 2 & 1 \\ -1 & 0 \end{pmatrix}^{n_3}.
		\]
		
		From these three elements and the identity matrix \( \text{Id} \in G \), we can deduce that \( nM_2 \subset M(G) \) for some \( n \).
	\end{proof}
	
	Take \( n \) as in Lemma~\ref{nM2subset}, we have \( n a_i a_j \in \mathbb{Z} \), and we conclude that \( \sqrt{n} a_i = b_i \sqrt{t} \) for some integers \( b_i, t \) with \( t \) square-free. By \( a_1a_4 - a_2a_3 = 1 \), we have \( b_1b_4 - b_2b_3 = \frac{n}{t} \). Thus, we have \( \sqrt{\frac{n}{t}} P \) is an integral matrix with determinant equal to \( \frac{n}{t} \).
	
	We say that two matrices \( P \) and \( Q \) are equivalent if \( P = QX \) for some \( X \in \mathrm{SL}_2(\mathbb{Z}) \). The finiteness of \( N(G) \) follows from the following Lemma:
	
	\begin{lem}
		Given \( m \in \mathbb{Z}^+ \), there are only finitely many equivalence classes of integral matrices with determinant \( m \).
	\end{lem}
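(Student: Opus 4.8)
The plan is to produce a canonical representative in each equivalence class by column reduction and then observe that there are only finitely many such representatives. The starting observation is that the relation $P \sim Q$, meaning $P = QX$ for some $X \in \mathrm{SL}_2(\mathbb{Z})$, is exactly the orbit relation for right multiplication by $\mathrm{SL}_2(\mathbb{Z})$, and right multiplication acts on an integral matrix by \emph{integral column operations}: multiplying on the right by $\begin{pmatrix} 1 & k \\ 0 & 1 \end{pmatrix}$ adds $k$ times the first column to the second, multiplying by its transpose does the symmetric operation, and $\begin{pmatrix} 0 & -1 \\ 1 & 0 \end{pmatrix}$ interchanges the columns up to sign. Since these generate $\mathrm{SL}_2(\mathbb{Z})$, every equivalence class is an orbit of the column-operation action.

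First I would apply the Euclidean algorithm to the two entries of the top row of an integral matrix $M$ with $\det M = m$, using the column operations above to reach a lower-triangular Hermite normal form
\[
\begin{pmatrix} a & 0 \\ b & d \end{pmatrix}, \qquad a > 0, \quad ad = m, \quad 0 \le b < a .
\]
This is the standard existence statement for the Hermite normal form over the PID $\mathbb{Z}$; the only mild point is to keep the reducing matrices inside $\mathrm{SL}_2(\mathbb{Z})$ rather than $\mathrm{GL}_2(\mathbb{Z})$, which is harmless because any required sign change can be absorbed into the $\mathrm{SL}_2(\mathbb{Z})$ generator $\begin{pmatrix} 0 & -1 \\ 1 & 0 \end{pmatrix}$. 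Finiteness is then immediate: the factor $a$ runs over the finitely many positive divisors of $m$ (with $d = m/a$), and for each $a$ the offset $b$ lies in the finite set $\{0, 1, \dots, a-1\}$, so there are at most $\sum_{a \mid m} a$ equivalence classes. A conceptually equivalent route, if preferred, is to note that a class is determined precisely by the index-$m$ sublattice of $\mathbb{Z}^2$ spanned by the columns of $M$ (the orientation being forced by $\det M = m > 0$), and that $\mathbb{Z}^2$ has only finitely many index-$m$ sublattices.

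There is no genuine obstacle here: this is the familiar finiteness of Hermite normal forms. The only steps requiring a line of care are verifying that the reducing column operations can be realized inside $\mathrm{SL}_2(\mathbb{Z})$ and not merely $\mathrm{GL}_2(\mathbb{Z})$, and that the triangular reduction terminates; both follow directly from the Euclidean algorithm. Since the application to $N(G)$ needs only finiteness and not the exact count, I would state the bound and, if desired, record the sharp value $\sum_{a\mid m} a$ as a remark rather than prove uniqueness of the normal form in detail.
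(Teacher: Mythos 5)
Your proof is correct and takes essentially the same route as the paper: the paper's explicit Bezout matrix $\begin{pmatrix} c_4 & -c_2 \\ c_3 & -c_1 \end{pmatrix}$ built from $\gcd(b_1,b_2)$ is just a one-step packaging of your Euclidean column reduction to lower-triangular form, followed by the same unipotent right multiplication $\begin{pmatrix} 1 & 0 \\ k & 1 \end{pmatrix}$ to bound the remaining entry. One trivial slip: right multiplication by that unipotent matrix changes $b$ by multiples of $d=m/a$, not $a$, so the reachable normal form has $0 \le b < d$ rather than $0 \le b < a$ --- this does not affect finiteness, and your bound $\sum_{a \mid m} a = \sum_{d \mid m} (m/d)$ is numerically unchanged.
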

	\begin{proof}
		Let \( B = \begin{pmatrix} b_1 & b_2 \\ b_3 & b_4 \end{pmatrix} \) with determinant $m$. Take \( x = \gcd(b_1, b_2) \), and define \( c_1 = b_1 / x \), \( c_2 = b_2 / x \). Then, take \( c_3 \) and \( c_4 \) such that \( c_2 c_3 - c_1 c_4 = 1 \). We have
		\[
		\begin{pmatrix} b_1 & b_2 \\ b_3 & b_4 \end{pmatrix}\begin{pmatrix} c_4 & -c_2 \\ c_3 & -c_1 \end{pmatrix}= \begin{pmatrix} x & 0 \\ * & \frac{m}{x} \end{pmatrix}.
		\]
		Right-multiply by another \( \begin{pmatrix} 1 & 0 \\ k & 1 \end{pmatrix} \), we deduce that there exists a \( C \in \mathrm{SL}_2(\mathbb{Z}) \) such that \( BC \) is bounded. Thus, there are only finitely many equivalence class of such \( B \).
	\end{proof}

	In this way, we can calculate \( N(G) \) for any finite index subgroup of \( \mathrm{PSL}_2(\mathbb{Z}) \). We write \( G_1 \sim G_2 \) if \( G_2 = P^{-1} G_1 P \) for some \( P \in \mathrm{SL}_2(\mathbb{R}) \). By definition we have \( N(G_1) = N(G_2) \) if \( G_1 \sim G_2 \).
	
	We recall the definition of some classical subgroups of  \( \mathrm{PSL}_2(\mathbb{Z}) \) as follows:
	
	\begin{defi} \label{definition-congruence-group}  
		\[
		\bar{\Gamma}(m) \coloneq \left\{ A \in \mathrm{SL}_2(\mathbb{Z}), \quad A \equiv \begin{pmatrix} 1 & 0 \\ 0 & 1 \end{pmatrix} \pmod{m} \right\}/\{\pm I\},
		\]
		\[
		\bar{\Gamma}_1(m) \coloneq \left\{ A \in \mathrm{SL}_2(\mathbb{Z}), \quad A \equiv \begin{pmatrix} 1 & * \\ 0 & 1 \end{pmatrix} \pmod{m} \right\}/\{\pm I\},
		\]
		\[
		\bar{\Gamma}_0(m) \coloneq \left\{ A \in \mathrm{SL}_2(\mathbb{Z}), \quad A \equiv \begin{pmatrix} * & * \\ 0 & * \end{pmatrix} \pmod{m} \right\}/\{\pm I\},
		\]
		\[ \bar{\Gamma}_1(8;4,1,2) \coloneq\left\{ \begin{pmatrix} 1 + 4a & 2b \\ 4c & 1 + 4d \end{pmatrix}\in \mathrm{SL}_2(\mathbb{Z}) \mid a \equiv c \pmod{2} \right\}/\{\pm I\}, \]
		\[ \bar{\Gamma}_1(16;16,2,2) \coloneq \left\{ \begin{pmatrix} 1 + 4a & b \\ 8c & 1 + 4d \end{pmatrix}\in \mathrm{SL}_2(\mathbb{Z})  \mid a \equiv c \pmod{2} \right\}/\{\pm I\}, \]
		and $\bar{\Gamma}^n$ is defined to be the subgroup generated by $\{A^n:A\in\mathrm{PSL}_2(\mathbb{Z}) \}$ .

	\end{defi}
	
	In Table~\ref{table:index-group}, we calculate \( N(G) \) for certain modular monodromy groups of elliptic K3 surfaces. All torsion-free congruence groups that serve as modular monodromy groups of elliptic K3 surfaces are included (\cite[Corollary~ 5.6]{monodromygroup}; \cite[Theorem~5.1]{seb-classification-monodromy-group}).

	\begin{table}[h]
		\centering
		\caption{Calculation of \( N(G) \)}
		\renewcommand{\arraystretch}{1.3}
		\setlength{\tabcolsep}{10pt} % Adjust column spacing
		\begin{tabular}{|c|l|c|}
			\hline
			\textbf{Index} & \multicolumn{1}{c|}{\textbf{Group} \( G \leq \mathrm{PSL}_2(\mathbb{Z}) \)} & \textbf{\( N(G) \)} \\
			\hline
			\textbf{1} & \( \mathrm{PSL}_2(\mathbb{Z}) \) & 1 \\
			\hline
			\textbf{2} & \( \bar{\Gamma}^2 \) & 1 \\
			\hline
			\multirow{2}{*}{\textbf{3}} & \( \bar{\Gamma}^3 \) & 1 \\
			& \( \bar{\Gamma}_0(2) \) & 2 \\
			\hline
			\textbf{4} & \( \bar{\Gamma}_0(3) \) & 2 \\
			\hline
			\textbf{6} & \( \bar{\Gamma}(2) \sim \bar{\Gamma}_0(4) \) & 4 \\
			\hline
			\multirow{4}{*}{\textbf{12}} 
			& \( \bar{\Gamma}_1(5) \) & 2 \\
			& \( \bar{\Gamma}_0(6) \) & 4 \\
			& \( \bar{\Gamma}(3) \sim \bar{\Gamma}_0(9) \) & 5 \\
			& \( \bar{\Gamma}_0(8) \sim (\bar{\Gamma}_0(4) \cap \bar{\Gamma}(2)) \) & 6 \\
			\hline
			\multirow{5}{*}{\textbf{24}} 
			& \( \bar{\Gamma}_1(7) \) & 2 \\
			& \( \bar{\Gamma}_1(8) \) & 6 \\
			& \( \bar{\Gamma}_1(8;4,1,2) \sim \bar{\Gamma}_1(16;16,2,2) \) & 6 \\
			& \( \bar{\Gamma}_0(12) \sim (\bar{\Gamma}_0(3) \cap \bar{\Gamma}(2))  \) & 8 \\
			& \( \bar{\Gamma}(4) \sim \bar{\Gamma}_0(16) \sim (\bar{\Gamma}_0(8) \cap \bar{\Gamma}(2)) \) & 10 \\
			\hline
		\end{tabular}
		\label{table:index-group}
	\end{table}

	\bibliography{main}

\end{document}